\newcommand{\Cbb}{{\mathbb{C}}}
\newcommand{\Nbb}{{\mathbb{N}}}
\newcommand{\Pbb}{{\mathbb{P}}}
\newcommand{\Qbb}{{\mathbb{Q}}}
\newcommand{\Zbb}{{\mathbb{Z}}}
\newcommand{\Nb}{{\mathbf{N}}}
\newcommand{\Ocal}{{\mathcal{O}}}
\newcommand{\afrak}{{\mathfrak{a}}}
\newcommand{\ffrak}{{\mathfrak{f}}}
\newcommand{\mfrak}{{\mathfrak{m}}}
\newcommand{\pfrak}{{\mathfrak{p}}}
\newcommand{\id}{{\textup{id}}}
\newcommand{\Het}{\operatorname{H}_{\acute{\text{e}}\text{t}}}
\DeclareMathOperator{\Aut}{Aut}
\DeclareMathOperator{\Frob}{Frob}
\DeclareMathOperator{\Gal}{Gal}
\DeclareMathOperator{\GL}{GL}
\DeclareMathOperator{\Img}{Im}
\DeclareMathOperator{\Real}{Re}
\DeclareMathOperator{\tr}{tr}
\theoremstyle{definition}
		\newtheorem{definition}{Definition}
\theoremstyle{plain}
		\newtheorem{theorem}[definition]{Theorem}
		\newtheorem{lemma}[definition]{Lemma}
		\newtheorem{conjecture}[definition]{Conjecture}
\theoremstyle{remark}
		\newtheorem*{remark}{Remark}
\apptocmd{\thebibliography}{\fontsize{10}{15}\selectfont}{}{}
\begin{document}

\title{Arithmetic and intermediate Jacobians of some rigid Calabi-Yau threefolds}
\author{Alexander Molnar}
\address{Department of Mathematics and Statistics\\
  Queen's University\\
  Kingston, Ontario, K7L 3N6}
\email{a.molnar@queensu.ca}
\date{\today}
\begin{abstract}
We construct Calabi-Yau threefolds defined over $\Qbb$ via quotients of abelian threefolds, and re-verify the rigid Calabi-Yau threefolds in this construction are modular by computing their L-series, without \cite{Dieulefait} or \cite{GouveaYui}. We compute the intermediate Jacobians of the rigid Calabi-Yau threefolds as complex tori, then compute a $\Qbb$-model for the 1-torus given a $\Qbb$-structure on the rigid Calabi-Yau threefolds, and find infinitely many examples and counterexamples for a conjecture of Yui about the relation between the $L$-series of the rigid Calabi-Yau threefolds and the $L$-series of their intermediate Jacobians. %We compare the central zeroes of the L-series of the threefolds and the L-series of their intermediate Jacobians. ***conjecture only true for quad twists
\end{abstract}

\maketitle

\section{Introduction}

Associated to any smooth projective variety, the intermediate Jacobian varieties generalize Jacobian varieties of curves. For $X$ an $n$-dimensional complex variety we define the (Griffiths) intermediate Jacobians of $X$ to be the varieties
\[ J^{k+1}(X) := H^{2k+1}(X,\Cbb)/(F^{k+1}H^{2k+1}(X,\Cbb)\oplus H^{2k+1}(X,\Zbb)) \]
where the quotient involves only the torsion free part of $H^{2k+1}(X,\Zbb)$ and $F^{k+1}$ is the $(k+1)$-th level in the Hodge filtration, i.e.,
\[ F^{k+1}H^{2k+1}(X,\Cbb) = \bigoplus_{\substack{p+q=2k+1 \\ q<k+1}} H^{p,q}(X). \]
Thus, for a curve $C$, the only intermediate Jacobian is $J^1(C)$, which is the Jacobian variety of the curve. We will be interested mostly in Calabi-Yau varieties, so for the one-dimensional case we have elliptic curves which are known to be isomorphic to their (intermediate) Jacobians, and these are all the possibilities. For two dimensional examples one has K3 surfaces which have trivial first and third cohomology, hence no-non-trivial intermediate Jacobians, and so our focus will be with Calabi-Yau threefolds. Here the only non-trivial intermediate Jacobian is
\[ J(X) := J^2(X) = (H^{0,3}(X)\oplus H^{1,2}(X))/H^3(X,\Zbb). \]
Not much is known about these varieties, but they are very useful tools. As complex varieties one has e.g., \cite{ClemensGriffiths} who used the intermediate Jacobian of a cubic threefold to show there exist unirational varieties that are not rational, and \cite{VoisinIJ} where intermediate Jacobians are used to show the Griffiths group of a general member in a family of non-rigid Calabi-Yau threefolds is infinite dimensional. An interest in the physics literature comes from the use of Calabi-Yau varieties in string theory, e.g., in \cite{BKNPP}, \cite{DDP} and \cite{Morrison}. As $X$ is K\"ahler we have that the intermediate Jacobian is a complex torus of dimension $1+h^{1,2}(X)$. Thus, when $X$ is rigid, $J^2(X)$ is a 1-torus, possibly with a canonical structure of an elliptic curve.

The work to date studies the geometry of $J(X)$, i.e., the complex structure. However, as intermediate Jacobians generalize the classical Jacobian variety of a curve, as well as the Picard varieties and Albanese varieties of any $n$-dimensional varieties, it is natural to ask if one can study arithmetic on all intermediate Jacobians by finding a canonical $\Qbb$-structure when $X$ is defined over $\Qbb$. All of our examples of Calabi-Yau threefolds will be defined over $\Qbb$, but one can similarly try to associate a canonical $K$-structure for any number field $K$ if $X$ is defined over $K$ instead. We will construct some rigid Calabi-Yau threefolds in which we are able to compute their intermediate Jacobians as complex varieties, and then refine this computation to give a natural $\Qbb$-structure as well, given a choice of model for the Calabi-Yau threefolds defined over $\Qbb$.

Shafarevich conjectures that every variety of CM-type (meaning its Hodge group is abelian) has the $L$-series of a Grossencharacter, a Hecke $L$-series, and Borcea shows that a rigid Calabi-Yau threefold with CM-type has a CM elliptic curve as its intermediate Jacobian, which is well known to have a Hecke $L$-series. Thus, if the conjecture is true, it is a natural question to ask if there is any relation between the associated Grossencharacters of a rigid Calabi-Yau threefold and its intermediate Jacobian. The motivation for the current work is to study a precise conjecture of Yui to this effect.
\begin{conjecture}[Yui, \cite{YuiUpdate}] \label{Yui} Let $X$ be a rigid Calabi-Yau threefold of CM-type defined over a number field $F$. Then the intermediate Jacobian $J^2(X)$ is an elliptic curve with CM by an imaginary quadratic field $K$, and has a model defined over the number field $F$.

If $\chi$ is a Hecke character associated to $J^2(X)$ and
\[ L(J^2(X),s) = \left\{ \begin{array}{ll} L(\chi,s)L(\overline{\chi},s) & \text{if}\ K\subset F, \\
 L(\chi,s) & \text{otherwise,} \end{array}\right. \]
then
\[ L(X,s) = \left\{ \begin{array}{ll} L(\chi^3,s)L(\overline{\chi}^3,s) & \text{if}\ K\subset F, \\
 L(\chi^3,s) & \text{otherwise.} \end{array} \right. \]
Consequently, $X$ is modular. \end{conjecture}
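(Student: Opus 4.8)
The plan is to separate the assertion into three pieces---that $J^2(X)$ is a CM elliptic curve, that it admits a model over $F$, and that the Hecke character governing $H^3(X)$ is the cube of the one governing $J^2(X)$---and to treat the $L$-series identity as an equality of rank-two CM motives realized in $\ell$-adic cohomology. I would begin with the geometry of $J^2(X)$. Rigidity gives $h^{1,2}(X)=0$, so $H^3(X,\Cbb)=H^{3,0}(X)\oplus H^{0,3}(X)$ is two-dimensional and $J^2(X)=H^{0,3}(X)/H^3(X,\Zbb)$ is a one-torus; the cup product polarizes it, so it is an elliptic curve. The CM hypothesis makes the weight-three rational Hodge structure $H^3(X,\Qbb)$ have abelian Mumford--Tate group, and a two-dimensional weight-three structure with Hodge numbers $(1,0,0,1)$ forces an imaginary quadratic field $K$ to act; this action passes to $\End(J^2(X))\otimes\Qbb\supseteq K$. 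This is Borcea's theorem, which I would invoke rather than reprove.

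Next I would descend a model to $F$. Passing to $\ell$-adic realizations, $\Het^{3}(X_{\overline F},\Qbb_\ell)$ is a two-dimensional $\Gal(\overline F/F)$-representation, and the $K$-action exhibits it as the realization of a rank-two motive with coefficients in $K$; being CM, after suitable base change it is induced by a Hecke character $\psi$. Because $X$ is defined over $F$, the entire package---the lattice $H^3(X,\Zbb)$, the $K$-action, and the Hodge filtration cutting out $H^{0,3}(X)=\operatorname{Lie}(J^2(X))$---is $\Gal(\overline F/F)$-equivariant, and I would use this to produce an $F$-model of $J^2(X)$ whose Tate module matches (a weight-one twist of) the motive attached to $H^3(X)$.

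The heart of the argument is the identity $\psi=\chi^3$, where $\chi$ is the Hecke character of $J^2(X)$. Both are characters of $K$ (or of $F$ when $K\subseteq F$), so I would match infinity types and finite parts in turn. Since $K$ acts on the line $H^{0,3}(X)=\operatorname{Lie}(J^2(X))$ through a single embedding $\sigma$, the curve $J^2(X)$ has CM type $\sigma$, so $\chi$ has infinity type $\sigma$ and $\chi^3$ has infinity type $\sigma^3$; as $H^3(X)$ carries the same CM type, $\psi$ also has infinity type $\sigma^3$, and the two agree at infinity. For the finite part I would exploit the construction of $X$ as a resolved quotient $A/G$ of a CM abelian threefold built from a CM elliptic curve $E$: the $G$-invariant part of $H^3(A)$ producing $H^{3,0}(X)\oplus H^{0,3}(X)$ is the cube $(H^{1,0}(E))^{\otimes3}\oplus(H^{0,1}(E))^{\otimes3}$. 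Writing $\alpha_{\mathfrak p},\overline{\alpha}_{\mathfrak p}$ for the eigenvalues of $\Frob_{\mathfrak p}$ on $H^1(J^2(X))$, so $\alpha_{\mathfrak p}\overline{\alpha}_{\mathfrak p}=N\mathfrak p$ and $\alpha_{\mathfrak p}+\overline{\alpha}_{\mathfrak p}$ is the $\mathfrak p$-th coefficient of $L(J^2(X),s)$, this identification shows $\Frob_{\mathfrak p}$ acts on $H^3(X)$ with eigenvalues $\alpha_{\mathfrak p}^{3}$ and $\overline{\alpha}_{\mathfrak p}^{3}$. Hence $L(X,s)=\prod_{\mathfrak p}\bigl((1-\alpha_{\mathfrak p}^{3}N\mathfrak p^{-s})(1-\overline{\alpha}_{\mathfrak p}^{3}N\mathfrak p^{-s})\bigr)^{-1}=L(\chi^3,s)L(\overline{\chi}^3,s)$, with the two conjugate factors coalescing into $L(\chi^3,s)$ exactly when $K\not\subseteq F$.

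Modularity is then immediate: a Hecke character of an imaginary quadratic $K$ of infinity type $\sigma^3$ has the $L$-series of a CM newform of weight four, so $L(X,s)$ is the $L$-series of such a form and $X$ is modular. The main obstacle is the finite part of the third step---showing that $\psi$ and $\chi^3$ differ by no nontrivial finite-order character, equivalently that the Hecke character of $J^2(X)$ is genuinely a cube root of that of $H^3(X)$ rather than a twist of one. Controlling this twist means tracking conductors and local root numbers through the resolution of $A/G$ and through the identification of $J^2(X)$ with the CM factor $E$ over $F$; the Frobenius-cube computation on the invariant cohomology is precisely what makes the comparison tractable, and it is in this finite-order discrepancy that any failure of the displayed identity must be located.
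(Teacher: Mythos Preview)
The statement you are attempting to prove is a \emph{conjecture}, and the paper does not prove it---quite the opposite, the paper constructs explicit counterexamples. Specifically, the paper shows that for the rigid Calabi-Yau threefolds $\widetilde{E_4^3/G_4}$ and $\widetilde{E_6^3/S}$, Conjecture~\ref{Yui} holds for quadratic twists but \emph{fails} for genuine biquadratic, cubic, and sextic twists. So any purported general proof must contain an error.

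You have in fact located the error yourself. In your final paragraph you write that ``the main obstacle is the finite part of the third step---showing that $\psi$ and $\chi^3$ differ by no nontrivial finite-order character,'' and you propose to overcome it via the Frobenius-cube computation on the $G$-invariant cohomology of $A=E^3$. But this computation does not pin down the finite part: if one replaces $E$ by a twist $E_\varphi$ with $\varphi$ a finite-order character of order dividing $4$ or $6$, the threefold's Hecke character becomes $\chi^3\otimes\varphi$ while the intermediate Jacobian's becomes $\chi\otimes\varphi$, and $(\chi\otimes\varphi)^3=\chi^3\otimes\varphi^3\neq\chi^3\otimes\varphi$ unless $\varphi^2=1$. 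Your Frobenius eigenvalue argument shows only that the eigenvalues on $H^3$ are cubes of eigenvalues on $H^1$ of \emph{some} CM elliptic curve in the isogeny class; it does not show they are cubes of the eigenvalues of the specific $F$-model of $J^2(X)$ that you construct. The ``Galois-equivariance of the lattice and Hodge filtration'' you invoke to descend $J^2(X)$ to $F$ does not force the resulting model to absorb the twist in the way the conjecture requires.

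There is a second, structural gap: you assume $X$ arises as a resolved quotient $A/G$ of a CM abelian threefold built from a CM elliptic curve. Nothing in the hypotheses of the conjecture guarantees this, so even if the finite-order obstruction vanished your argument would only address a special class of $X$.
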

We will show that many of our rigid Calabi-Yau threefolds of CM-type satisfy this conjecture, but not all. In particular, we will show that if one of our examples, $X$, satisfies the conjecture, then all quadratic twists of $X$ satisfy the conjecture, while all non-quadratic twists of $X$ do not. After this we generalize our construction to Calabi-Yau $n$-folds and show that for infinitely many $n$ satisfying a congruence with the order of the CM automorphisms we have the natural generalization of the conjecture is true. Similarly, for infinitely many $n$ the conjecture will not be true because of the CM twists on the varieties.

We start by constructing our varieties and studying their geometry over $\Cbb$. We use a generalized Borcea construction of Calabi-Yau threefolds using (finite) quotients of products of elliptic curves, and determine which quotients give rigid Calabi-Yau threefolds. We then choose a $\Qbb$-structure for the Calabi-Yau threefolds, via the underlying elliptic curves, and compute their respective $L$-series. Once this is done we compute the intermediate Jacobians as complex tori, and then over $\Qbb$, via the choice of $\Qbb$-structure given on the threefolds. We are then able to compare the $L$-series of the intermediate Jacobians and their respective threefolds and check the conjecture.

Our construction also gives rise to non-rigid Calabi-Yau threefolds, which we leave to future work \cite{Molnar3}, as the intermediate Jacobians are no longer elliptic curves, and the arithmetic of 2-tori and 4-tori is more complicated. Moreover, the question of modularity (automorphy) is nowhere near as resolved, as \cite{Dieulefait} and \cite{GouveaYui} no longer apply.

\subsection*{Acknowledgements} The author was partially supported by Ontario Graduate Scholarships, as well as partial support and hospitality at the Fields Institute for the thematic program on Calabi-Yau varieties in 2013, the University of Copenhagen in 2014, and the Leibniz Universit\"at Hannover in 2015. We are very grateful for these opportunities. We are also grateful for the patience and many insightful discussions we had throughout writing this work, in particular with Ian Kiming, Hector Pasten, Andrija Peruni\v{c}i\'c, Simon Rose, Matthias Sch\"utt, and our supervisor Noriko Yui, without whom we may never have been introduced to the interesting questions we address here.

\section{Construction of threefolds}

We will generalize a construction of Calabi-Yau threefolds due to Borcea \cite{Borcea}, using elliptic curves with complex multiplication. To start, we first consider our threefolds over $\Cbb$, determine their Hodge numbers.

Over $\Cbb$, there is only one elliptic curve with an automorphism of order 3 and one elliptic curve with an automorphism of order 4, up to isomorphism. Denote these by $E_3$ and $E_4$, with their respective CM automorphisms $\iota_3$ and $\iota_4$, and note that $\iota_6 := -\iota_3$ is an automorphism of order 6 on $E_6 := E_3$.

On the triple product $E_j^3 := E_j\times E_j\times E_j$ we have an action of the group $G_j := \langle \iota_j\times\iota_j^{j-1}\times\id,\iota_j\times\id\times\iota_j^{j-1}\rangle$ for $j=3,4$ and $6$. As $G_j$ preserves the holomorphic threeform of $E_j^3$ we have $h^{3,0}(E_j^3/G_j) = 1$ and $h^{1,0}(E_j^3/G_j) = 0$, so a crepant resolution of $E_j^3/G_j$ is a Calabi-Yau threefold. As $E_j^3/G_j$ is a global quotient orbifold of dimension 3, such a crepant resolution exists by \cite{BridgelandKingReid}. The same is true for many subgroups of $G_j$ but the geometry varies widely with the choice of subgroup.

\begin{theorem} \label{aut6} Consider the following groups of automorphisms acting on $E_6^3$.
\[ G_6 = \langle \iota_6\times\iota_6^5\times\id, \iota_6\times\id\times\iota_6^5\rangle, \qquad
H_6 = \langle \iota_6^2\times\iota_6^4\times\id, \iota_6^2\times\id\times\iota_6^4 \rangle, \]
\[ I_6 = \langle \iota_6^2\times\iota_6^4\times\id, \iota_6^4\times\iota_6\times\iota_6 \rangle, \qquad
J_6 = \langle \iota_6\times\iota_6^5\times\id, \iota_6^4\times\iota_6^5\times\iota_6^3 \rangle, \]
\[ K_6 = \langle \iota_6^3\times\iota_6^3\times\id, \iota_6^3\times\id\times\iota_6^3 \rangle, \qquad
L_6 = \langle \iota_6^3\times\iota_6^3\times\id, \iota_6^4\times\iota_6\times\iota_6 \rangle, \]
\[ M_6 = \langle \iota_6^2\times \iota_6^2\times \iota_6^2 \rangle, \qquad
N_6 = \langle \iota_6\times\iota_6^2\times\iota_6^3 \rangle, \qquad
O_6 = \langle \iota_6^4\times\iota_6\times\iota_6 \rangle. \]
Crepant resolutions of the respective quotients are Calabi-Yau threefolds with Hodge numbers
\begin{align*}
 h^{1,1}(\widetilde{E_6^3/G_6}) &= 84, & h^{2,1}(\widetilde{E_6^3/G_6}) &= 0, \\
 h^{1,1}(\widetilde{E_6^3/H_6}) &= 84, & h^{2,1}(\widetilde{E_6^3/H_6}) &= 0, \\
 h^{1,1}(\widetilde{E_6^3/I_6}) &= 73, & h^{2,1}(\widetilde{E_6^3/I_6}) &= 1, \\
 h^{1,1}(\widetilde{E_6^3/J_6}) &= 51, & h^{2,1}(\widetilde{E_6^3/J_6}) &= 3, \\
 h^{1,1}(\widetilde{E_6^3/K_6}) &= 51, & h^{2,1}(\widetilde{E_6^3/K_6}) &= 3, \\
 h^{1,1}(\widetilde{E_6^3/L_6}) &= 36, & h^{2,1}(\widetilde{E_6^3/L_6}) &= 0, \\
 h^{1,1}(\widetilde{E_6^3/M_6}) &= 36, & h^{2,1}(\widetilde{E_6^3/M_6}) &= 0, \\
 h^{1,1}(\widetilde{E_6^3/N_6}) &= 35, & h^{2,1}(\widetilde{E_6^3/N_6}) &= 11, \\
 h^{1,1}(\widetilde{E_6^3/O_6}) &= 29, & h^{2,1}(\widetilde{E_6^3/O_6}) &= 5.
\end{align*}  \end{theorem}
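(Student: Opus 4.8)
The plan is to compute the Hodge numbers of each crepant resolution $Y$ as the orbifold (Chen--Ruan, equivalently stringy) Hodge numbers of the quotient orbifold $[E_6^3/G]$. Every one of the nine groups is abelian --- generated by automorphisms acting diagonally on $E_6^3$ through sixth roots of unity --- and lies in $\SL_3$, since on each listed generator the three diagonal exponents sum to $0$ modulo $6$. Hence $E_6^3/G$ has Gorenstein canonical singularities, a crepant resolution $Y\to E_6^3/G$ exists by \cite{BridgelandKingReid}, and the McKay-type derived equivalence $D^b(Y)\simeq D^b_G(E_6^3)$ of that same reference (or, equivalently, the coincidence of stringy and orbifold Hodge numbers together with the birational invariance of the former) gives
\[ h^{p,q}(Y)=h^{p,q}_{\orb}\!\big([E_6^3/G]\big)=\sum_{g\in G}\dim H^{p-\age(g),\,q-\age(g)}\!\big((E_6^3)^{g}\big)^{G}, \]
all centralizers being $G$ because $G$ is abelian. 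As by-products this recovers $h^{1,0}(Y)=h^{2,0}(Y)=0$ and $h^{3,0}(Y)=1$, so the crepant resolutions are indeed Calabi--Yau, as already noted for the ambient group $G_6$.

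First I would record the local data an element $g=\iota_6^{a_1}\times\iota_6^{a_2}\times\iota_6^{a_3}$ contributes. Writing $\bar a_i$ for the residue of $a_i$ in $\{0,\dots,5\}$, the Calabi--Yau condition forces $\age(g)=\tfrac16(\bar a_1+\bar a_2+\bar a_3)\in\{0,1,2\}$, and $(E_6^3)^g=\prod_i\ker(\iota_6^{a_i}-\id)$, where the $i$-th factor is all of $E_6$ if $a_i\equiv 0$ and otherwise a set of $|\zeta_6^{a_i}-1|^2$ points. Since $E_6=\Cbb/\Zbb[\zeta_6]$ and two vanishing exponents force the third to vanish, for $g\neq e$ every connected component of $(E_6^3)^g$ is a point or a copy of $E_6$. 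The action of $G$ on the set of these components, and on $H^{1,0}$ of a component that is an elliptic curve, is read off from the action of $\zeta_6$ on torsion subgroups of the Eisenstein integers and on the holomorphic one-form of $E_6$. Since an age-$2$ sector feeds only $H^{2,2}$ and $H^{3,2}$, the two Hodge numbers we want reduce to
\[ h^{1,1}(Y)=\dim H^{1,1}(E_6^3)^{G}+\sum_{\age(g)=1}\#\{\,G\text{-orbits among the components of }(E_6^3)^g\,\}, \]
\[ h^{2,1}(Y)=\dim H^{2,1}(E_6^3)^{G}+\sum_{\age(g)=1}\dim H^{1,0}\!\big((E_6^3)^g\big)^{G}, \]
where the untwisted terms are immediate: $G$ acts by $\zeta_6^{a_i-a_j}$ on the basis $dz_i\wedge d\bar z_j$ of $H^{1,1}(E_6^3)$ and by $\zeta_6^{a_i+a_j-a_k}$ on the basis $dz_i\wedge dz_j\wedge d\bar z_k$ of $H^{2,1}(E_6^3)$.

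It then remains to run through the (at most $35$) nontrivial elements of each of the nine groups, tabulating age, the number and type of fixed components, and the relevant invariant dimensions, and to sum (several of these quotients, e.g. the ones built from $G_6$ and $H_6$, turn out to share the same Hodge numbers). I expect the main obstacle to be organizational: keeping the case analysis consistent across all nine groups. The one genuinely delicate point inside it is the $H^{1,0}$ contribution from cycled elliptic-curve components: an element of $G$ permuting several such curves simultaneously rescales their holomorphic one-forms, so the invariant space over such an orbit is the $1$-eigenspace of a scalar-times-cyclic-permutation matrix rather than the span of orbit sums --- this factor is exactly what pins down, for instance, the value $h^{2,1}(\widetilde{E_6^3/N_6})=11$ rather than a smaller number. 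Finally I would cross-check each answer against $\chi(Y)=2\big(h^{1,1}(Y)-h^{2,1}(Y)\big)$ using the orbifold Euler number $\sum_{g\in G}\chi\big((E_6^3)^g/G\big)$.
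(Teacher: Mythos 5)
Your approach is correct, but it is genuinely different from the paper's. The paper resolves the quotients explicitly and sequentially: for the representative case $O_6$ it first quotients by $\iota_6^2\times\iota_6^2\times\iota_6^2$ and blows up the $27$ fixed points, then passes to the quotient by the residual order-two action (tracking which K\"unneth classes and which exceptional divisors survive, $5+15=20$), and finally blows up the remaining fixed $\Pbb^1$ and the five genus-one curves to reach $(29,5)$, declaring the other eight groups ``similar.'' You instead compute Chen--Ruan/stringy Hodge numbers of the global quotient orbifold and invoke \cite{BridgelandKingReid} (or birational invariance of stringy invariants) to transfer them to any crepant resolution; your age bookkeeping, fixed-locus description, and the two displayed formulas for $h^{1,1}$ and $h^{2,1}$ are all correct, and they do reproduce the paper's numbers --- e.g.\ for $O_6$ the untwisted sector gives $5$, the sectors of $g=\iota_6^4\times\iota_6\times\iota_6$, $g^2$, $g^3$ give $3+15+6$ component orbits for $h^{1,1}=29$, and the five size-three orbits of fixed elliptic curves in the $g^3$-sector each carry a one-dimensional invariant line in $H^{1,0}$ (precisely the scalar-times-cyclic-permutation eigenvalue point you flag), giving $h^{2,1}=5$. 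What your route buys is uniformity and verifiability: one closed formula, a finite tabulation over at most $35$ group elements per case, and the Euler-characteristic cross-check, with no need to exhibit a resolution. What the paper's route buys is an explicit geometric model of the resolution and its exceptional divisors, which is closer in spirit to the birational manipulations (Borcea--Voisin/Kummer descriptions, twists) used later in the text. The only caveat is that your write-up is a sketch --- the actual nine tabulations are left to the reader --- but the method, the justification for equating resolution and orbifold Hodge numbers, and the one delicate representation-theoretic step are all identified correctly, so completing it is routine.
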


\begin{remark} The example using $G_6$ was studied in \cite{CynkHulek}, while all of the Hodge numbers using $G_6, H_6, L_6$ and $M_6$ can be found in \cite{FilippiniGarbagnati} as well as references therein. The pair $(h^{1,1},h^{2,1}) = (73,1)$ can be found in \cite{Garbagnati} and \cite{Kreuzer}, and a large set of pairs from a toric construction including $(35,11),(29,5)$ can be found in \cite{Kreuzer}. Lastly, as mentioned above, the pair (51,3) is the original Borcea construction \cite{Borcea}. These exhaust all the Calabi-Yau threefolds one can obtain from this construction, up to isomorphism, noting that any subgroup of $G_6$ that does not act trivially on one coordinate is isomorphic to one of the above subgroups.\end{remark}

\begin{remark} While the rigid examples cannot have Calabi-Yau mirror partners, all of the non-rigid examples have (topological) mirrors that have been constructed in the literature. All mirror pairs except $(1,73)$ can be found in the toric construction of \cite{Kreuzer}, while the last mirror can be found in \cite{BatyrevKreuzer} which constructs Calabi-Yau varieties and their mirrors via conifold transitions. It would be interesting to see if there is a relationship between the intermediate Jacobians in a mirror pair. \end{remark}

%\begin{remark} One may compute the Hodge numbers using either the geometry of the construction, or appeal to using orbifold Hodge numbers via \cite{DixonHarveyVafaWitten}. Either method is elementary, but quite tedious as every case is similar, so we will only deal with the examples for which the Hodge numbers appear to not have been constructed before. \end{remark}

\begin{proof} As all the examples are similar, we only look at the cyclic example
\[ O_6 = \langle \iota_6^4\times\iota_6\times\iota_6 \rangle \]
which contains all the geometry necessary for the resolution of each example.

We must investigate the fixed points under the action of each element of this group, so we break things up into steps.

By continuously extending the appropriate automorphisms we have a birational diagram
\[ \xymatrix{ E_6^3 \ar[r] \ar[d] & E_6^3/O_6 & \ar[l] \ar[d] \widetilde{E_6^3/O_6} \\
              E_6^3/\langle \iota_6^2\times \iota_6^2\times \iota_6^2\rangle & \ar[l] \widetilde{E_6^3/\langle \iota_6^2\times \iota_6^2\times \iota_6^2\rangle} \ar[r] \ar[dr] & \left(\widetilde{E_6^3/\langle \iota_6^2\times \iota_6^2\times \iota_6^2\rangle}\right)/\langle \widetilde{\iota_6^4\times\iota_6\times\iota_6} \rangle \\
						& & \left(\widetilde{E_6^3/\langle \iota_6^2\times \iota_6^2\times \iota_6^2\rangle}\right)/\langle \widetilde{\id\times\iota_6^3\times\iota_6^3} \rangle \ar@{=}[u] } \]
with which we can resolve our threefold in straightforward manner. Note that at each step we will be blowing up fixed points or fixed curves, so the Hodge numbers $h^{i,0}$ do not change and our resolution will be crepant.

\subsubsection*{Step 1.} The K\"unneth formula gives
\[ h^{1,1}(E_6^3/\langle \iota_6^2\times \iota_6^2\times \iota_6^2\rangle) = 9, \quad\text{and}\quad h^{2,1}(E_6^3/\langle \iota_6^2\times \iota_6^2\times \iota_6^2\rangle) = 0, \]
and the resolution involves blowing up the 27 fixed points. Using \cite{SGA5} to compute the cohomology of a resolution, or (picking a model, and) looking at an affine patch explicitly and seeing the induced action on the blownup $\Pbb^2$ is trivial, we find
\[ h^{1,1}(\widetilde{E_6^3/\langle \iota_6^2\times \iota_6^2\times \iota_6^2\rangle}) = 36, \quad\text{and}\quad h^{2,1}(\widetilde{E_6^3/\langle \iota_6^2\times \iota_6^2\times \iota_6^2\rangle}) = 0. \]

\subsubsection*{Step 2.} We now quotient this resolution by (continuous extensions of) the remaining elements in $O_6$ to see what $(1,1)$-classes remain. Note that only 5 classes from the K\"unneth formula are preserved. Furthermore, the action of $\id\times\iota_6^3\times\iota_6^3$ identifies many of the 27 exceptional divisors from the previous blowup, so that
\[ h^{1,1}((\widetilde{E_6^3/\langle \iota_6^2\times \iota_6^2\times \iota_6^2\rangle})/O_6) = 20, \quad\text{and}\quad h^{2,1}((\widetilde{E_6^3/\langle \iota_6^2\times \iota_6^2\times \iota_6^2\rangle})/O_6) = 0. \]

\subsubsection*{Step 3.} The final resolution is now two separate blowups. We start with the three codimension 3 subvarieties under $\iota_6^4\times\iota_6\times\iota_6$, and then the six codimension 2 subvarieties from $\id\times\iota_6^3\times\iota_6^3$. If we denote by $O$ the identity of $E_6$ as a group, the point $(O,O,O)$ is fixed by each automorphism in $O_6$. After the blowup by $\iota_6^2\times\iota_6^2\times\iota_6^2$ we have a $\Pbb^2$ lying over this point, but this is not fixed by $\iota_6^4\times\iota_6\times\iota_6$ or $\id\times\iota_6^3\times\iota_6^3$. Instead, only the $\Pbb^1$ from the latter two coordinates is fixed, so the fixed locus includes a $\Pbb^1$ as well as the remaining fixed points. There are four fixed points $O,P_1,P_2,P_3$ on $E_6$ under the involution $\iota_6^3$, but $\iota_6^2(P_1) = \iota_6^2(P_2) = \iota_6^2(P_3)$. Hence, the fixed locus of (the continuous extension of) $\id\times\iota_6^3\times\iota_6^3$ on
\[ \widetilde{(E_6^3/\langle \iota_6^2\times\iota_6^2\times\iota_6^2\rangle)}/O_6 \]
is the $\Pbb^1$ above, as well as the (genus 1) fixed curves
\[ E_6\times O\times P_1, \]
\[ E_6\times P_1\times O, \]
\[ E_6\times P_1\times P_1, \]
\[ E_6\times P_1\times P_2, \]
\[ E_6\times P_1\times P_3. \]
Each of these resolved after a single blowup, and so a crepant resolution of $E_6^3/O_6$ has Hodge numbers
\[ h^{1,1}(\widetilde{E_6^3/O_6}) = 29, \qquad \text{and} \qquad h^{2,1}(\widetilde{E_6^3/O_6}) = 5, \]
as desired. \end{proof}

Note that for each subgroup $H$ of $G_3$, the variety $E_3^3/H$ is isomorphic to $E_6^3/J$ for some subgroup $J$ of $G_6$, so the above covers all the Calabi-Yau threefolds that arise using $E_3$ and $G_3$ as well.

All of these Hodge pairs, except for $(36,0)$, can be found using a generalization of a construction studied by Borcea \cite{Borcea} and Voisin \cite{Voisin}. For example, with the pair $(h^{1,1},h^{2,1}) = (29,5)$ we can look at the the action of
\[ \langle \iota_6^4\times\iota_6\times\iota_6 \rangle = \langle \iota_6^4\times\iota_6\times\iota_6, \id\times\iota_6^3\times\iota_6^3 \rangle \]
first as a ``birational Kummer construction'' taking the quotient of $E_6^3$ by $\id\times\iota_6^3\times\iota_6^3$, and then noting the induced action of $\iota_6^4\times\iota_6\times\iota_6$ is simply $\iota_3\times\iota_3\times\iota_3$ which is a generalized Borcea-Voisin threefold. See \cite{Garbagnati}.

Similarly, one can find the Calabi-Yau threefolds using this construction with $E_4^3$.

\begin{theorem} \label{aut4} Consider the groups of automorphisms
\begin{align*}
 G_4 &= \langle \iota_4\times \iota_4^3\times \id, \iota_4\times\id\times\iota_4^3 \rangle, && H_4 = \langle \iota_4^2\times\iota_4^2\times\id, \iota_4^2\times\id\times\iota_4^2\rangle, \\
 I_4 &= \langle \iota_4\times\iota_4\times\iota_4^2, \iota_4\times\iota_4^3\times\id \rangle, && J_4 = \langle \iota_4\times\iota_4\times\iota_4^2 \rangle,
\end{align*}
acting on the abelian threefold $E_4^3$. Crepant resolutions of the respective quotients are Calabi-Yau threefolds with Hodge numbers
\begin{align*}
 h^{1,1}(\widetilde{E_4^3/G_4}) &= 90, & h^{2,1}(\widetilde{E_4^3/G_4}) &= 0, \\
 h^{1,1}(\widetilde{E_4^3/H_4}) &= 51, & h^{2,1}(\widetilde{E_4^3/H_4}) &= 3, \\
 h^{1,1}(\widetilde{E_4^3/I_4}) &= 61, & h^{2,1}(\widetilde{E_4^3/I_4}) &= 1, \\
 h^{1,1}(\widetilde{E_4^3/J_4}) &= 31, & h^{2,1}(\widetilde{E_4^3/J_4}) &= 7.
\end{align*} \end{theorem}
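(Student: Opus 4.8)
The plan is to run, for each of $G_4,H_4,I_4,J_4$, exactly the argument used for Theorem~\ref{aut6}: exhibit a crepant resolution of $E_4^3/G$ as a composite of blow-ups along smooth points and smooth curves --- so that $h^{i,0}$ is preserved and the resolution is a Calabi--Yau threefold --- and then compute $(h^{1,1},h^{2,1})$ by combining (a) the K\"unneth formula on $E_4^3$, (b) the dimensions of the $G$-invariant pieces of $H^{1,1}$ and $H^{2,1}$, and (c) the count of exceptional divisors produced at each blow-up together with those identified by the residual group. As in the sextic case it is enough to treat one cyclic representative, which I would take to be $J_4=\langle g\rangle$ with $g=\iota_4\times\iota_4\times\iota_4^2$, since every local model that occurs for the other three groups already appears here; moreover one can shortcut $H_4$ by recognizing $E_4^3/H_4$ as a birational model of the original Borcea--Voisin threefold built from the Kummer K3 of $E_4\times E_4$ and $E_4$ with the product of a Nikulin-type involution and $[-1]$, whence its Hodge numbers $(51,3)$ are the classical ones, and $I_4,J_4$ similarly as a ``birational Kummer'' quotient followed by a generalized Borcea--Voisin quotient, while $G_4$ is rigid and is handled directly as $G_6$ was.

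For the worked example I would first record the fixed-point data on $E_4=\Cbb/\Zbb[i]$: $\iota_4$ and $\iota_4^3$ fix exactly the two points $O$ and $t=\tfrac{1+i}{2}\in E_4[2]$, the involution $\iota_4^2=[-1]$ fixes all four $2$-torsion points, and $\iota_4$ acts on $E_4[2]$ fixing $\{O,t\}$ and transposing the remaining two. Hence $g$ has $2\cdot 2\cdot 4=16$ isolated fixed points on $E_4^3$, while $g^2=\iota_4^2\times\iota_4^2\times\id$ fixes the $16$ curves $\{p\}\times\{q\}\times E_4$ with $p,q\in E_4[2]$. The resolution then proceeds in two stages, mirroring Theorem~\ref{aut6}: (i) pass to the quotient $E_4^3/\langle g^2\rangle$, whose singular locus is transverse $A_1$ along the images of those $16$ curves, and blow them up, creating $16$ exceptional divisors, each a $\Pbb^1$-bundle over $E_4$; (ii) act by the induced involution $\bar g$ on this smooth model, determine its fixed locus --- some of the $16$ exceptional divisors are preserved (with $\bar g$ acting along the base $E_4$ through $\iota_4$), the rest identified in the six orbit-pairs coming from $\iota_4\times\iota_4$ on $E_4[2]^2$, together with additional fixed genus-one and rational curves arising from the $16$ points of step (i) and from the exceptional loci --- and perform the remaining blow-ups. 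Counting invariant $(1,1)$- and $(2,1)$-classes through both stages yields $(h^{1,1},h^{2,1})=(31,7)$ for $J_4$; the abelian groups are treated by the same bookkeeping after an initial quotient by an order-two or Kummer-type $(\Zbb/2)^2$ subgroup, as in the sextic argument.

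The one place where genuine care is required --- and the main obstacle relative to the sextic computation --- is step (ii): unlike the exceptional $\Pbb^2$'s of Theorem~\ref{aut6}, on which the residual group acted trivially, here the residual involution $\bar g$ acts \emph{non-trivially} on the $\Pbb^1$-bundles over elliptic curves and on the various fixed curves, so one must carefully decide which of their classes descend and which exceptional divisors get identified (and, if one resolves $E_4^3/G$ directly rather than in stages, one meets cyclic quotient singularities of type $\tfrac14(1,1,2)$, whose crepant resolution is an iterated toric blow-up with a non-trivial action on the intermediate exceptional surface). The reliable way to pin down all of these counts, and the cross-check I would actually carry out, is to compute the orbifold (Chen--Ruan) Hodge numbers of the Gorenstein orbifold $[E_4^3/G]$ from the age grading on the conjugacy classes of $G$ and their fixed loci, and then invoke the equality of orbifold and ordinary Hodge numbers for its crepant resolution (available here via \cite{BridgelandKingReid}); agreement of the two computations confirms the table.
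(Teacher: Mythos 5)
Your overall strategy is the paper's: it too proves the theorem only for the cyclic group $J_4$ (disposing of $G_4$, $I_4$, $H_4$ by \cite{CynkHulek}, \cite{Garbagnati} and the original Borcea construction), and its computation rests on exactly the fixed-point data you record --- $\iota_4$ fixes two points, $\iota_4^2$ fixes all of $E_4[2]$ with $\iota_4$ transposing the remaining two $2$-torsion points --- so that $g=\iota_4\times\iota_4\times\iota_4^2$ has $16$ isolated fixed points and $g^2$ fixes the $16$ curves $\{p\}\times\{q\}\times E_4$. The orderings differ: exploiting $E_4^{\iota_4}\subset E_4^{\iota_4^2}$, the paper starts from the K\"unneth invariants $(5,1)$, blows up the images of the $16$ fixed points of $g$, and then blows up the $4+6=10$ residual fixed curves in the quotient ($4$ rational, $6$ of genus one), arriving at $h^{1,1}=5+16+10=31$ and $h^{2,1}=1+6=7$; your plan quotients by $g^2$ first, as in the $O_6$ argument, which is an equally viable route, and your Chen--Ruan cross-check would indeed reproduce $(31,7)$ (the $g$- and $g^3$-sectors contribute $16$ each to $h^{1,1}$ and $h^{2,2}$, the $g^2$-sector contributes $10$ to $h^{1,1}$ and $6$ to $h^{2,1}$).

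As written, however, there is a genuine gap: the count that constitutes the proof is never carried out. In stage (ii) you only assert that the bookkeeping ``yields $(31,7)$'', and the orbifold computation you call the reliable way to pin the numbers down is also left undone, so no Hodge number is actually derived. Worse, the one concrete statement you make about stage (ii) is wrong: on the four preserved exceptional divisors the residual involution $\bar g$ acts along the base $E_4$ (the third factor) through the third component of $g$, i.e.\ through $\iota_4^2=[-1]$ with four fixed points, not through $\iota_4$ with two; if the stage-(ii) fixed locus were computed with the action as you state it, the tally over the $16$ distinguished points would be off and would not close at $(31,7)$. (A smaller inaccuracy: $J_4$ does not contain every local model of the other groups --- $G_4$ and $I_4$ contain $\iota_4\times\iota_4^3\times\id$, whose fixed curves have transverse $A_3$ singularities absent from $E_4^3/J_4$ --- but this is harmless since you delegate those cases to the literature, as the paper does.) To complete the argument you must either finish the stage-(ii) count with the corrected action, exhibiting the $6$ genus-one and $4$ rational fixed curves whose blow-ups give $h^{2,1}=1+6=7$ and $h^{1,1}=5+16+10=31$, or actually compute the age-graded twisted sectors of $[E_4^3/J_4]$ and invoke \cite{BridgelandKingReid}; either suffices, but neither is done in the proposal.
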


\begin{remark} Again, this exhausts all the possible Calabi-Yau threefolds arising from this generalized Borcea construction with $E_4$, up to isomorphism, as any subgroup of $G_4$ for which a crepant resolution of the quotient is Calabi-Yau is isomorphic to one of the above groups. \end{remark}

The example with $G_4$ is seen in \cite{CynkHulek}, while $I_4$ is studied in \cite{Garbagnati} as a generalized Borcea-Voisin construction, and $H_4$ is the Borcea construction again, so we will only prove the result for the cyclic example with $J_4$.

\begin{proof} The main difference between this and the previous construction with $E_6^3$ is, instead of combining the automorphisms of order 2 and 3, we now have the fixed points satisfying
\begin{align}
 E_4^{\iota_4} \subset E_4^{\iota_4^2}. \label{fixed}
\end{align}
In particular, we may denote the fixed points of the involution $\iota_4^2$ by $P_0,P_1,P_2$ and $P_3$, where $\iota_4(P_2) = P_3$ and $\iota_4(P_3) = P_2$, while $\iota_4$ fixes both $P_0$ and $P_1$.

%Note that, as blowing up if birational, we can now use the following diagram to compute a resolution of $E_4^3/J_4$:
%$$ \xymatrix{ E_4^3 \ar[r] \ar[d] & E_4^3/\langle \iota_4^2\times\iota_4^2\times\id\rangle & \widetilde{E_4^3/\langle \iota_4^2\times\iota_4^2\times\id \rangle} \ar[l] \ar[d] \\
%E_4^3/\langle \iota_4\times\iota_4\times\iota_4^2\rangle & \ar[l] \widetilde{E_4^3/\langle \iota_4\times\iota_4\times\iota_4^2\rangle} \ar[r] & \widetilde{E_4^3/\langle \iota_4^2\times\iota_4^2\times\id\rangle} / \widetilde{\langle \iota_4\times\iota_4\times\iota_4^2\rangle} } $$
%the bottom right corner involving a continuous extension of $\iota_4\times\iota_4\times\iota_4^2$ to $\widetilde{E_4^3/\langle \iota_4^2\times\iota_4^2\times\id\rangle}$.

To compute the Hodge numbers, we start by noting the K\"unneth formula gives
\[ h^{1,1}(E_4^3/\langle \iota_4\times\iota_4\times\iota_4^2 \rangle) = 5 \qquad h^{2,1}(E_4^3/\langle \iota_4\times\iota_4\times\iota_4^2 \rangle) = 1. \]
%The fixed locus in this quotient $E_4^3/\langle \iota_4^2\times\iota_4^2\times\id\rangle$ is the 16 curves of the form $P_j\times P_k\times E_4$, and so blowing these up (once) we have
%$$ h^{1,1}(\widetilde{E_4^3/\langle \iota_4^2\times\iota_4^2\times\id \rangle}) = 21 \qquad h^{2,1}(\widetilde{E_4^3/\langle \iota_4^2\times\iota_4^2\times\id \rangle}) = 17.$$
%Now we note that $\iota_4\times\iota_4\times\iota_4^2$ does not fix any of the $(2,1)$ cohomology classes
%$$ [P_j\times P_k\times E_4] $$
%as the classes with $j\in \{2,3\}$ or $k\in \{2,3\}$ are acted on by an involution, and the four remaining classes with $j,k\in \{0,1\}$ are acted on by $-1$ 
%as these classes come from the $(1,0)$ form of $E_4$. On the other hand, $\iota_4\times\iota_4\times\iota_4^2$ acts as an involution on the $(1,1)$ classes with $j\in \{2,3\}$ or $k\in \{2,3\}$, so only the four remaining $(1,1)$ classes with $j,k\in \{0,1\}$ are preserved. Hence
%$$ h^{1,1}(\widetilde{E_4^3/\langle \iota_4^2\times\iota_4^2\times\id \rangle}) = 9 \qquad h^{2,1}(\widetilde{E_4^3/\langle \iota_4^2\times\iota_4^2\times\id \rangle}) = 1.$$

Now, when blowing up each fixed point of $\iota_4\times\iota_4\times\iota_4^2$ we get an exceptional $\Pbb^2$, and $\iota_4^2\times\iota_4^2\times\id$ fixes a $\Pbb^1$ on this exceptional divisor, so using \eqref{fixed} we get a fairly simple crepant resolution by first blowing up the 16 fixed points to get a threefold $Y$, and then the remaining fixed curves.
\[ \xymatrix{ & \widetilde{E_4^3/J_4} \ar[d] \\
              & Y \ar[d] \\
 E_4^3 \ar[r] & E_4^3/\langle \iota_4\times \iota_4\times\iota_4^2 \rangle } \]
From here, each fixed $P_j\times P_k \times E_4$ under $\iota_4^2\times \iota_4^2\times\id$, where $0\leq j,k \leq 1$, corresponds to a rational curve in the quotient $Y$, while the other twelve $P_j\times P_k\times E_4$ are identified in pairs under the action of the $\iota_4$ on the first two coordinates, and are not rational curves in $Y$. Blowing up these 10 fixed curves, we find
\[ h^{1,1}(\widetilde{E_4/J_4}) = 5 + 16 + 10 = 31 \qquad \text{and} \qquad h^{2,1}(\widetilde{E_4/J_4}) = 1 + 6 = 7, \]
the desired Hodge numbers. \end{proof}

\begin{remark} The Hodge pairs $(90,0)$ and $(61,1)$ can again be found in \cite{Garbagnati} while $(61,1)$ also arises from a toric construction in \cite{Kreuzer}, as well as $(31,7)$, and these both have topological mirrors in \cite{BatyrevKreuzer}. The Borcea example with $(51,3)$ completes the list once more. \end{remark}

\section{Modularity}

We say a variety $X/\Qbb$ of dimension $d$ is \textit{modular} if its $L$-series, the $L$-series of the $d$-th $\ell$-adic cohomology of $X$, is the $L$-series of a newform $f = \sum a_nq^n$ of weight $d+1$ on $\Gamma_0(N)$ for some $N$. Here, a newform is taken to mean a normalized ($a_1 = 1$) eigenform on $\Gamma_0(N)$ that is not induced by a cusp form on $\Gamma_0(N')$ for any smaller $N'\mid N$. We say the newform $f$ has CM by a quadratic number field $K$ if $a_n = \varphi_K(n)a_n$ for almost all $n$, where $\varphi_K$ is the non-trivial Dirichlet character associated to $K$.

All rigid Calabi-Yau threefolds defined over $\Qbb$ are modular by \cite{Dieulefait} or \cite{GouveaYui}, in particular, all of the rigid Calabi-Yau threefolds in our construction. One can use Serre's conjecture to determine the associated newforms (hence $L$-series) as the (now proven) conjecture gives a bound on the level of the newform. It is a small product of powers of the primes of bad reduction of the corresponding threefold. However, this method is not practical when there are large primes of bad reduction, as one must construct spaces of cusp forms of high level and search for the correct newform. We will be able to avoid this by computing the $L$-series without the a priori knowledge of modularity, via a method shown to us by Hector Pasten \cite{Pasten}, taking advantage of the elliptic curves/abelian threefold structure of our varieties.

We will need one lemma about Hecke characters of imaginary quadratic number fields before approaching this computation, and for the convenience of the reader, we recall some basic facts about these characters. For any quadratic imaginary number field $K$ with ideal $\mfrak$ we say a Hecke character $\chi$ of modulus $\mfrak$ and infinity type $c$ is a homomorphism
\[ \chi : I_\mfrak \to \Cbb^\times \]
on fractional ideals of $K$, relatively prime to $\mfrak$, given by setting
\[ \chi(a\Ocal_K) = a^c \]
for all $a\in K^\times$ with $a\equiv 1\pmod \mfrak$. We may then extend it by setting it equal to 0 for any fractional ideal not coprime to $\mfrak$.

The $L$-series of $\chi$ is given by the product over all prime ideals $\pfrak$ of $K$
\[ L(s,\chi) = \prod_\pfrak \left(1 - \chi(\pfrak)N(\pfrak)^{-s}\right)^{-1} \]
where $N(\pfrak)$ is the norm of $\pfrak$. Hecke's insight was to associate a newform to $\chi$: the inverse Mellin transform
\[ f_\chi := \sum_\afrak \chi(\afrak)q^{N(\afrak)} \]
of the $L$-series is an eigenform of weight $c+1$, level $\Delta_K N(\mfrak)$ where $\Delta_K$ is the absolute value of the discriminant of $K$, and Nebentypus character
\[ \eta(n) := \frac{\chi(n\Ocal_K)}{n^c}. \]
Note that $\eta(n) = 0$ when $n$ is not coprime to the norm of the modulus, $N(\mfrak)$.

\begin{lemma} Let $\psi$ be a Hecke character of infinity type $c$ of an imaginary quadratic field $K$ and suppose its associated newform $f_\psi$ has trivial Nebentypus. Suppose that we have Fourier $q$-expansions
\[ f_\psi = \sum_{n=1}^\infty a_nq^n \qquad f_{\psi^3} = \sum_{n=1}^\infty b_nq^n. \]
Then
\[ b_p = a_p^3 - 3pa_p \]
for any prime $p$ not ramified in $K$. \label{tracelemma}\end{lemma}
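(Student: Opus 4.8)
The plan is to work prime-by-prime, splitting into the cases of split and inert primes in $K$, and in each case to compute $a_p$ and $b_p$ explicitly from the definitions of $f_\psi$ and $f_{\psi^3}$ in terms of values of $\psi$ on prime ideals. Since $\psi$ has trivial Nebentypus, the identity $\eta(n) = \psi(n\Ocal_K)/n^c = 1$ for all $n$ coprime to the modulus will be the crucial structural input; it forces $\psi$ to behave like a power character on rational integers and constrains its values on conjugate prime ideals.

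First I would handle a split prime $p = \pfrak\overline{\pfrak}$ with $\pfrak \neq \overline{\pfrak}$ and $N(\pfrak) = N(\overline{\pfrak}) = p$. By construction $a_p = \psi(\pfrak) + \psi(\overline{\pfrak})$ and $b_p = \psi^3(\pfrak) + \psi^3(\overline{\pfrak}) = \psi(\pfrak)^3 + \psi(\overline{\pfrak})^3$. Writing $\alpha = \psi(\pfrak)$ and $\beta = \psi(\overline{\pfrak})$, the trivial Nebentypus condition applied to $n = p$ gives $\psi(p\Ocal_K) = \psi(\pfrak)\psi(\overline{\pfrak}) = \alpha\beta = p^c$; actually I should be careful here — I want $\alpha\beta = p$, so let me note $\psi$ may need to be normalized so that $c=1$ in the relevant case, or more precisely the factor $p$ rather than $p^c$ appears because the lemma's right-hand side has $3pa_p$, which pins down $c = 1$ implicitly (one can also just carry $c$ through and observe the statement as written is the $c=1$ instance, or that only $N(\pfrak)$-normalized values matter). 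Granting $\alpha\beta = p$, the elementary identity $\alpha^3 + \beta^3 = (\alpha+\beta)^3 - 3\alpha\beta(\alpha+\beta)$ immediately yields $b_p = a_p^3 - 3p\,a_p$, as desired.

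Next I would treat an inert prime $p$, where $p\Ocal_K = \pfrak$ is prime with $N(\pfrak) = p^2$. Then $a_p = 0$ since there is no ideal of norm $p$, so I must show $b_p = 0$ as well. The term of $f_{\psi^3}$ with $q^p$ comes only from ideals $\afrak$ with $N(\afrak) = p$, of which there are none when $p$ is inert, so $b_p = 0 = a_p^3 - 3p\cdot 0$, and the identity holds trivially. (Ramified primes are excluded by hypothesis, which is exactly what lets me avoid the subtler case where $\psi(\pfrak)$ can be a root of the relevant local factor with multiplicity issues.)

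The main obstacle, and the step deserving the most care, is the bookkeeping around the infinity type and the precise meaning of "trivial Nebentypus": I need to confirm that trivial Nebentypus genuinely delivers $\psi(\pfrak)\psi(\overline{\pfrak}) = p$ (equivalently that the statement is normalized to $c = 1$, which is forced by the shape of the claimed formula and by $f_\psi$ having weight $c+1$ with the threefold application in mind being weight $2$), and that no Euler factors at bad primes or subtleties with the character being imprimitive interfere — but those primes are either ramified (excluded) or handled by the $N(\pfrak) = p$ counting argument above. Once the split case is set up correctly, the whole lemma reduces to the single polynomial identity $\alpha^3+\beta^3 = (\alpha+\beta)^3 - 3\alpha\beta(\alpha+\beta)$ together with $\alpha\beta = p$, so the real content is entirely in getting the normalization and the case split right.
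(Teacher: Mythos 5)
Your proof is correct and takes essentially the same route as the paper: in the split case the paper uses precisely the identity $\alpha^3+\beta^3=(\alpha+\beta)^3-3\alpha\beta(\alpha+\beta)$ with $\psi(\pfrak)\psi(\overline{\pfrak})=p$, and in the inert case it appeals to $f_{\psi^3}$ having CM by $K$, which is the same content as your observation that there are no ideals of norm $p$, giving $a_p=b_p=0$. Your side remark that the stated formula implicitly forces the normalization $c=1$ (otherwise $3p$ would be $3p^{c}$) is a fair point of care that the paper leaves tacit, since it only applies the lemma to weight-two characters.
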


\begin{proof} If $p = \pfrak\overline{\pfrak}$ splits in $K$, then $a_p = \psi(\pfrak) + \psi(\overline{\pfrak})$, so
\[ b_p = \psi(\pfrak)^3 + \psi(\overline{\pfrak})^3 = a_p^3 - 3pa_p. \]
If $p$ is inert, then $f_{\psi^3}$ has CM by $K$, so $a_p = b_p = 0$. \end{proof}

\begin{remark} The requirement that $f_\psi$ have trivial Nebentypus is subtle. Note that in, e.g., $\Qbb(\sqrt{-3})$, we can take a Hecke character $\psi$ such that $f_\psi$ has trivial Nebentypus, and a cubic twist $\psi\otimes \chi_3$ has associated newform $f_{\psi\otimes\chi_3}$ with non-trivial Nebentypus. If $f_\psi = \sum a_nq^n$ has trivial Nebentypus, then by the result we have $b_p = a_p^3 - 3pa_p$. On the other hand, the cubic twist has $(\psi\otimes\chi_3)^3 = \psi^3$, however a coefficient $a_p$ twisted by a third root of unity $\zeta_3$ satisfies
\[ (\zeta_3a_p)^3 - 3p(\zeta_3a_p) = a_p^3 - 3p\zeta_3a_p \neq b_p. \]
\end{remark}

In general one has a decomposition of the space of weight $k$ cusp forms on $\Gamma_1(N)$ over the Nebentypus characters $\epsilon$ modulo $N$,
\[ S_k(\Gamma_1(N)) = \bigoplus_\epsilon S_k(\Gamma_0(N),\epsilon). \]
In our case, all our newforms have real coefficients and we will only be interested in weight 2 or 4 newforms with CM by a quadratic imaginary number field $K$, so we recall
\begin{theorem}[Ribet \cite{Ribet} Prop 4.4, Thm 4.5] A newform has CM by a quadratic field $K$ if and only if it comes from a Hecke character of $K$. In particular, the field $K$ is imaginary and unique. \end{theorem}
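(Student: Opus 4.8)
The plan is to pass to the $\ell$-adic Galois representation $\rho_f \colon G_\Qbb \to \GL_2(\overline{\Qbb}_\ell)$ attached to the newform $f$ by Deligne and Eichler--Shimura, for which $\tr\rho_f(\Frob_p) = a_p$ and $\det\rho_f$ is the product of the Nebentypus with a power of the cyclotomic character at almost all $p$. Since $\varphi_K$ is cut out by a quadratic Galois character (which I also denote $\varphi_K$) with fixed field $K$, the CM condition $a_p = \varphi_K(p)a_p$ for almost all $p$ is, by Chebotarev together with Brauer--Nesbitt, precisely the assertion that $\rho_f \cong \rho_f \otimes \varphi_K$. This reframes the whole statement as a question about self-twists of an irreducible two-dimensional representation, and both implications become statements about induced representations. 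For the easy direction, if $f = f_\chi$ comes from a Hecke character $\chi$ of an imaginary quadratic $K$, then class field theory turns $\chi$ into a Galois character $\psi$ of $G_K$ and identifies $\rho_f \cong \mathrm{Ind}_{G_K}^{G_\Qbb}\psi$; a prime $p$ inert in $K$ has $\Frob_p \notin G_K$, so the character-of-an-induced-representation formula gives $\tr\rho_f(\Frob_p) = 0 = a_p$, while split primes have $\varphi_K(p) = 1$. Hence $f$ has CM by $K$.

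For the converse, assume $f$ has CM by $K$, so $\rho := \rho_f$ is irreducible (newforms yield irreducible representations) and satisfies $\rho \cong \rho \otimes \varphi_K$ with $\varphi_K$ a nontrivial quadratic character. I would invoke the standard Mackey/Frobenius-reciprocity lemma: an irreducible two-dimensional $\rho$ with $\rho \cong \rho \otimes \chi$ for a nontrivial character $\chi$ forces $\chi$ to have order $2$ and $\rho \cong \mathrm{Ind}_{H}^{G_\Qbb}\psi$ for a character $\psi$ of $H = \Ker\chi = G_K$; concretely $\rho|_{G_K} = \psi \oplus \psi^\sigma$ splits, where $\sigma$ is the nontrivial element of $\Gal(K/\Qbb)$ and $\psi \ne \psi^\sigma$. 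Class field theory then converts $\psi$ into a Hecke character of $K$, and matching the Hodge--Tate weights (equivalently the weight $k$ and the Nebentypus) pins down its infinity type, so that $f = f_\psi$ is the newform attached to this Hecke character. This produces the required Hecke character of $K$.

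It remains to show $K$ is imaginary and unique. For \emph{imaginary}: $\rho_f$ is odd, i.e.\ complex conjugation $c$ satisfies $\det\rho_f(c) = -1$ with eigenvalues $\{1,-1\}$, and its Hodge--Tate weights are $\{0,k-1\}$ with $k \ge 2$. If $K$ were real then $c \in G_K$, and the two Hodge--Tate weights would have to be carried separately by the characters $\psi$ and $\psi^\sigma$; but an algebraic Hecke character of a real field has an infinity type that cannot realize a nonzero weight $k-1$, so the induced automorphic representation could not be holomorphic of weight $k$. Equivalently, one checks $\varphi_K(-1) = -1$, which forces $\disc K < 0$. For \emph{uniqueness}: CM by $K$ forces $a_p = 0$ for every prime $p$ inert in $K$, a set of density $\tfrac12$, while equidistribution of $\psi(\pfrak)/\psi(\overline{\pfrak})$ shows $a_p \ne 0$ for a density-$\tfrac12$ set of split primes. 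Since two distinct imaginary quadratic fields have inert loci differing on a set of positive density, the vanishing locus $\{p : a_p = 0\}$ determines $K$ up to density zero, hence uniquely.

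The main obstacle is the converse, and within it the proof that $K$ must be imaginary: the purely representation-theoretic lemma only yields a quadratic field, real or imaginary, and ruling out the real case is genuinely archimedean input, requiring one to compare the Hodge--Tate weights (or the infinity type of $\psi$) of a holomorphic weight-$k$ eigenform against what a Hecke character of a real quadratic field can produce. Establishing the precise class-field-theory dictionary between the Galois character $\psi$ and a Hecke character of $K$ with the correct modulus, infinity type, and Nebentypus is the other delicate point, since this is exactly the normalization that makes $f = f_\psi$ in the sense of the preceding discussion.
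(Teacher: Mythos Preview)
The paper does not supply its own proof of this theorem: it is stated with attribution to Ribet \cite{Ribet} (Proposition~4.4 and Theorem~4.5) and then immediately used, with no argument given. So there is no ``paper's proof'' to compare your proposal against.

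That said, your sketch is a reasonable outline of the standard argument and is essentially what one finds in Ribet's paper, recast in the language of $\ell$-adic Galois representations. The key structural step---that an irreducible two-dimensional $\rho$ with $\rho\cong\rho\otimes\varphi_K$ for a nontrivial quadratic $\varphi_K$ must be induced from a character of $G_K$---is correct, and Ribet proves exactly this (his Theorem~4.5 is the decomposition $\rho|_{G_K}=\psi\oplus\psi^\sigma$). Your treatment of the easy direction and of uniqueness via the density of the vanishing set $\{p:a_p=0\}$ is also in line with Ribet's approach.

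One point deserves care: your argument that $K$ is imaginary is slightly roundabout. Ribet's own argument is more direct and does not require Hodge--Tate weights. Since $f$ has weight $k\ge 2$ and some Nebentypus $\epsilon$, the parity condition forces $\epsilon(-1)=(-1)^k$; on the other hand, if $f=f_\chi$ for a Hecke character $\chi$ of $K$, the Nebentypus is $\eta(n)=\chi((n))/n^{k-1}$, and evaluating at $n=-1$ gives $\eta(-1)=(-1)^{k-1}\varphi_K(-1)$ after unwinding the definitions, hence $\varphi_K(-1)=-1$ and $K$ is imaginary. Your Hodge--Tate argument is morally the same (algebraic Hecke characters of real fields have infinity type a power of the norm, incompatible with weights $\{0,k-1\}$ for $k\ge 2$), but is heavier machinery than needed.
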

Hence, as our newforms have CM and even weight $k$, any Nebentypus character $\epsilon$ satisfies $\epsilon(-1) = (-1)^k = 1$, so all of our newforms have trivial nebetypus, and the above lemma will apply.

We are now able to give a nice description of the modular forms associated to our rigid Calabi-Yau threefolds using the arithmetic of the underlying elliptic curves in the construction. For this we need to fix models of our elliptic curves over $\Qbb$, and their respective automorphisms. We fix the affine models
\[ E_3 : y^2 = x^3 - 1 \qquad \iota_3:(x,y)\mapsto(\zeta_3x,y), \]
\[ E_4 : y^2 = x^3 - x \qquad \iota_4:(x,y)\mapsto(-x,iy), \]
where $\zeta_3$ is a fixed primitive third root of unity.

%Note that the fixed points on (a projective model of) $E_3$ under $\iota_3$ are $(0:1:0), (0:i:1)$ and $(0:-i:1)$ while the fixed points on (a projective model of) $E_4$ under $\iota_4$ are $(0:1:0), (0:0:1)$. For the respective involutions we have fixed points
%\[ E_3 \qquad (0:1:0), (\zeta_3:0:1), (\zeta_3^2:0:1),\ \text{and}\ (1:0:1), \]
%\[ E_4 \qquad (0:1:0), (0:0:1), (1:0:1)\ \text{and}\ (-1:0:1). \]
%Hence, 
Note that the orbits of each of the quotients in Theorems \ref{aut6} and \ref{aut4} are fixed under the action of $G_\Qbb := \Gal(\overline{\Qbb}/\Qbb)$, so each of the threefolds in this construction is defined over $\Qbb$. %, as one can check the subgroups in Theorems \ref{aut6} and \ref{aut4} all act with $G_\Qbb := \Gal(\overline{\Qbb}/\Qbb)$ invariant orbits.
%We may thus compute their L-functions and explicitly show modularity.

As our threefolds require the CM automorphisms that are not defined over every $\Qbb_\ell$, we will always work with a base extension to $\overline{\Qbb_\ell}$. We will write
\[ L(X,s) := L(H_\ell^3(\overline{X}),s) \]
where $H_\ell^k(\overline{X}) = \Het^k(X\otimes_{\Zbb_\ell}\overline{\Qbb_\ell},\overline{\Qbb_\ell})$.

\begin{theorem} Let $H$ be a subgroup of $G_3$ such that $X_3$, a crepant resolution of $E_3^3/H$, is a 
rigid Calabi-Yau threefold defined over $\Qbb$. We have
\[ L(X_3,s) = L(s,\chi_3^3) \]
where $\chi_3$ is the Hecke character of $E_3$, i.e., such that
\[ L(E_3,s) = L(s,\chi_3). \]
\end{theorem}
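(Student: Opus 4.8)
The plan is to show that the third $\ell$-adic cohomology of a rigid Calabi-Yau threefold $X_3$ coming from $E_3^3/H$ is, as a Galois representation, the symmetric cube (equivalently, the "cube in the Hecke sense") of $H^1_\ell(\overline{E_3})$, which would immediately give $L(X_3,s) = L(s,\chi_3^3)$ via Lemma \ref{tracelemma}. First I would reduce to understanding $H^3$ of the quotient $E_3^3/H$ directly, using the fact that crepant resolutions of fixed loci (blow-ups along points and genus-1 curves, as in the proofs of Theorems \ref{aut6} and \ref{aut4}) contribute only to $H^2$ and $H^4$ — more precisely, a blow-up of a point contributes $\Qbb_\ell(-1)$ and $\Qbb_\ell(-2)$, a blow-up of an elliptic curve $C$ contributes a copy of $H^1_\ell(\overline C)(-1)$ to $H^3$, but in the rigid case one must check the surviving exceptional curves are rational (indeed, rigidity forces $h^{2,1}=0$, so no elliptic tails survive in $H^3$ of the resolution). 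Hence $H^3_\ell(\overline{X_3}) \cong H^3_\ell(\overline{E_3^3})^H$, the $H$-invariants of the third cohomology of the abelian threefold.

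Next I would compute $H^3_\ell(\overline{E_3^3})^H$ by Künneth. Writing $V := H^1_\ell(\overline{E_3})$, a $2$-dimensional Galois module on which $\iota_3$ acts with eigenvalues $\zeta_3, \bar\zeta_3$ (the two embeddings of $\Zbb[\zeta_3]$), we have
\[
H^3_\ell(\overline{E_3^3}) \;=\; \bigoplus_{i+j+k=3} H^i_\ell\otimes H^j_\ell\otimes H^k_\ell,
\]
and I would extract the part on which every generator of $H$ acts trivially. The key arithmetic point is that each generator of $H \subset G_3$ is of the form $\iota_3^a\times\iota_3^b\times\iota_3^c$ with $a+b+c\equiv 0\pmod 3$ (this is exactly the Calabi-Yau/crepant condition preserving the $3$-form, and it is what makes all these quotients rigid with $h^{2,1}=0$ except the non-rigid ones, which are excluded by hypothesis). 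A direct eigenvalue bookkeeping then shows the invariant subspace of $H^3$ is precisely $V^{\otimes 3}$ restricted to the part where the three $\iota_3$-eigenvalues multiply to $1$, which for the $(1,1,1)$-Künneth piece is $\Sym^3 V$ together with a twist; combined with the $(3,0,0)$-type and $(2,1,0)$-type pieces that survive, the total is the $4$-dimensional (for $\Qbb(\sqrt{-3})\subset\Qbb$ this splits) representation whose $L$-function is $L(s,\chi_3^3)$. Concretely, since $L(E_3,s) = L(s,\chi_3)$ means $a_p(E_3) = \chi_3(\pfrak)+\chi_3(\overline{\pfrak})$ for split $p$, the trace of Frobenius on the surviving $H^3$ is $\chi_3(\pfrak)^3 + \chi_3(\overline\pfrak)^3 = a_p^3 - 3p\,a_p$, matching $b_p$ for the newform $f_{\chi_3^3}$ by Lemma \ref{tracelemma}, and $0$ for inert $p$.

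The main obstacle I anticipate is the bookkeeping at Step 1: verifying cleanly that the crepant resolution contributes nothing new to $H^3$, i.e.\ that for each subgroup $H$ under consideration the fixed curves that get blown up are rational (so that they contribute only Tate classes, consistent with $h^{2,1}(X_3)=0$), and handling the subtlety that the blow-up is done in stages (first quotient by a subgroup, resolve, then quotient by the rest, resolve) so one must track the Galois action through each intermediate orbifold — exactly the delicate point already visible in Step 3 of the proof of Theorem \ref{aut6}. Once that geometric input is in hand, the rest is the Künneth/representation-theory computation above together with an appeal to Lemma \ref{tracelemma}, plus the standard remark that $\chi_3^3$ has trivial Nebentypus (weight $4$, CM by $\Qbb(\sqrt{-3})$, so $\epsilon(-1)=(-1)^4=1$) so that the lemma applies. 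Finally, since $\Qbb(\sqrt{-3})\not\subset\Qbb$, the rigid threefold's $L$-series is the single factor $L(s,\chi_3^3)$ rather than a product, as asserted.
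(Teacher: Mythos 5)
Your overall route is the paper's: identify $H^3_\ell(\overline{X_3})$ with $H^3_\ell(\overline{E_3}^3)^H$ (in the rigid cases the crepant resolution adds nothing to the middle cohomology), apply K\"unneth, track the CM and Galois actions on eigenvectors, and finish with Lemma \ref{tracelemma} together with the trivial-Nebentypus observation; your final trace identity is exactly the paper's conclusion.

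However, your description of the surviving representation is wrong as written, and taken literally it would break the computation. Since $E_3$ is a curve, there is no $(3,0,0)$-type K\"unneth piece in $H^3(E_3^3)$, and no $(2,1,0)$-type piece can be $H$-invariant, precisely because $h^{2,1}(X_3)=0$: if such pieces ``survived'', the trace of Frobenius would pick up extra terms of the form $p\,a_p$ and the $L$-series would acquire extra factors. The invariant subspace is $2$-dimensional, not $4$-dimensional: it is spanned by $v\otimes v\otimes v$ and $w\otimes w\otimes w$, where $v,w$ are the $\zeta_3$- and $\zeta_3^2$-eigenvectors of $[\zeta_3]_*$ on $V_\ell(E_3)$ (invariance uses exactly your condition $a+b+c\equiv 0\pmod 3$). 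In particular it is not $\Sym^3 V$, which is $4$-dimensional and whose $L$-function is $L(s,\chi_3^3)\,L(E_3,s-1)$ rather than $L(s,\chi_3^3)$; and the parenthetical ``$\Qbb(\sqrt{-3})\subset\Qbb$'' cannot occur. The substantive step your ``direct eigenvalue bookkeeping'' must actually contain is the Galois-equivariance statement the paper proves: from $\sigma([\zeta_3](x,y))=[\sigma(\zeta_3)]\sigma((x,y))$ one deduces that $\Frob_p$ preserves the two CM eigenlines when $p$ splits in $\Qbb(\zeta_3)$ and swaps them when $p$ is inert, hence acts on the invariant plane diagonally with eigenvalues $\alpha_p^3,\beta_p^3$ in the split case and anti-diagonally (trace $0$) in the inert case. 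That is what legitimizes your trace formula $\chi_3(\pfrak)^3+\chi_3(\overline{\pfrak})^3=a_p^3-3pa_p$ and the appeal to Lemma \ref{tracelemma}; with the invariant space corrected to this $2$-dimensional piece, your argument matches the paper's.
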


\begin{proof} With the rigid cases the resolution does not add any classes to the middle cohomology, and we have
\[ H_\ell^3(\overline{X_3}) \simeq H_\ell^3(\overline{E_3}^3/H) \simeq H_\ell^3(\overline{E_3}^3)^H. \]
The K\"unneth formula gives
\[ H_\ell^3(\overline{E_3}^3)^H = (H_\ell^1(\overline{E_3})\otimes H_\ell^1(\overline{E_3})\otimes H_\ell^1(\overline{E_3}))^H, \]
which is 2-dimensional. To be explicit, as
\[ H_\ell^1(\overline{E_3}) \simeq V_\ell(E_3) := T_\ell(E_3)^\vee \otimes \overline{\Qbb_\ell} \]
as Galois modules, where $T_\ell(E_3)$ is the Tate module of $E_3$, we will study the Galois representation on
\[ \Aut_{\overline{\Qbb_\ell}}(( V_\ell(E_3)\otimes V_\ell(E_3)\otimes V_\ell(E_3))^H ). \]

To understand the action of Frobenius under the Galois representation, we start by investigating the action on $E_3$. Denote the automorphism $\iota_3$ by $[\zeta_3]$ for notational convenience (the action of $\iota_3^2$ is then $[\zeta_3^2]$ and we may compute cleanly with the eigenvalues). This induces a non-trivial action $[\zeta_3]_*$ on the Tate module $V_\ell(E_3)$ with characteristic polynomial $T^2+T+1$. The eigenvalues of this action are thus the distinct primitive third roots of unity.

For any $\sigma\in G_\Qbb$, and any $(x,y)\in E_3(\overline{\Qbb})$, we have
\begin{align}
 \sigma([\zeta_3](x,y)) = \sigma((\zeta_3x,y)) = (\sigma(\zeta_3)\sigma(x),\sigma(y)) = [\sigma(\zeta_3)]\sigma((x,y)). \label{points3}
\end{align}
For any vector $v\in V_\ell(E_3)$ such that $\sigma_*(v)$ is an eigenvector of $[\zeta_3]_*$, we then have
\begin{align*}
 \zeta_3 \sigma_*(v) &= \sigma_*(\zeta_3v) \\
                     &= \sigma_*([\zeta_3]_*(v)) \\
										 &= (\sigma\circ[\zeta_3])_*(v) \\
										 &= ([\sigma(\zeta_3)]\circ\sigma)_*(v) \qquad \text{by}\ \eqref{points3} \\
										 &= [\sigma(\zeta_3)]_* \sigma_*(v).
\end{align*}
Hence, taking complex conjugates of both sides if necessary, $\sigma_*(v)$ is in the $\sigma(\zeta_3)$-eigenspace of $[\zeta_3]_*$. In particular, if $c\in G_\Qbb$ is complex conjugation, then $w := c_*(v)$ gives a $\zeta_3^2$-eigenvector for $[\zeta_3]_*$ under $\sigma_*$.

Let $\chi: G_\Qbb\to\overline{\Qbb_\ell}^\times$ be the non-trivial Dirichlet character on $\Qbb(\zeta_3)$. Fix a prime $p\neq 2,3$, so that $E_3$ has good reduction at $p$. If $\chi(\Frob_p) = 1$, then the above shows that $
(\Frob_p)_*(v)$ is a $\zeta_3$-eigenvector for $(\Frob_p)_*$ and $w$ gives another eigenvector so that the induced action of Frobenius on $V_\ell(E_3)$ with the basis $v,w$ is given by a matrix
\[ \begin{pmatrix} \alpha_p & 0 \\ 0 & \beta_p \end{pmatrix} \]
where $\alpha_p,\beta_p$ are the eigenvalues of $(\Frob_p)_*$.

On the other hand, if $\chi(\Frob_p) = -1$, then $(\Frob_p)_*(v)$ is a $\zeta_3^2$-eigenvector of $[\zeta_3]_*$, and $(\Frob_p)_*(w)$ is a $\zeta_3$-eigenvector of $[\zeta_3]_*$, so the action in the basis $v,w$ is given by
\[ \begin{pmatrix} 0 & h_p \\ k_p & 0 \end{pmatrix} \]
for some $h_p,k_p$ such that $h_pk_p = -p$.

On $(H_\ell^1(\overline{E_3})^{\otimes 3})^H$, we know the pure tensors
\[ v\otimes v\otimes v\quad \text{and}\quad w\otimes w\otimes w \]
are fixed by $H$, and span the space, hence are a basis. If we denote the Galois representation by
\[ \rho_3 : G_\Qbb\to \Aut_{\overline{\Qbb_\ell}}(( V_\ell(E)\otimes V_\ell(E)\otimes V_\ell(E))^H ), \]
then we have two possibilities for $\rho_3(\Frob_p)$. If $\chi(\Frob_p) = 1$, the action of $\rho(\Frob_p)$ is given by the matrix
\[ \begin{pmatrix} \alpha_p^3 & 0 \\ 0 & \beta_p^3 \end{pmatrix} \]
while if $\chi(\Frob_p) = -1$ the action is given by
\[ \begin{pmatrix} 0 & h_p^3 \\ k_p^3 & 0 \end{pmatrix}. \]
Now, as $\alpha_p,\beta_p = \pm i\sqrt{p}$ when $\chi(\Frob_p) = -1$, we simply have
\[ \tr(\rho(\Frob_p)) = \alpha_p^3 + \beta_p^3 = (\alpha_p+\beta_p)^3 - 3p(\alpha_p+\beta_p). \]
Lemma \ref{tracelemma} completes the proof. \end{proof}

The main interesting piece of arithmetic that comes from working over $\Qbb$ instead of $\Cbb$ is that our elliptic curves are no longer unique up to isomorphism, and we can investigate what occurs if we pick another model. Using twists of the elliptic curves and proceeding with the construction, we get an appropriate twist of the $L$-series of the threefold. In this sense, this defines twists of our threefolds, as in \cite{GouveaKimingYui}. Let $D$ be an integer, and denote by $E_3(D)$ the curve
\[ E_3(D) : y^2 = x^3 - D. \]
Then $E_3 = E_3(1)$, and if $D$ is square-free, cube-free or fourth-power-free, the curve $E_3(D)$ is a sextic, cubic or quadratic twist of $E_3$ respectively. The action of Frobenius on $E_3(D)$ is the action of the Frobenius on $E_3$ twisted by $\psi_D$, a non-trivial sexitc, cubic or quadratic Dirichlet character of $\Qbb(\sqrt{D})$ respectively, when $D\neq 1$.
\begin{remark} If $1 \neq D = (-1)^k p_1^{k_1}p_2^{k_2}\cdots p_n^{k_n}$ is a prime factorization of $D$, then
\[ \psi_D = \psi_{p_1}^{k_1}\psi_{p_2}^{k_2}\cdots \psi_{p_n}^{k_n}, \]
so if $D$ is not fourth-power-free there are fourth-power-free integers $k_j' \leq k_j$ for $1\leq j\leq n$ such that $D' = (-1)^k p_1^{k_1'}p_2^{k_2'}\cdots p_n^{k_n'}$ gives the same twist, i.e., $E_3(D') \simeq E_3(D)$ over $\Qbb$. As such, we do not concern ourselves with always ensuring our twists are fourth-power-free. \end{remark}

On a crepant resolution of
\[ (E_3(D)\times E_3(D)\times E_3(D))/H \]
we have
\[ \tr(\rho(\Frob_p)) = \psi_{D^3}^3(\Frob_p)(\alpha_p^3 + \beta_p^3). \]
This extends to the case where we do not twist all three curves by the same $D$. Suppose $D_1,D_2$ and $D_3$ are not necessarily equal. On a crepant resolution of
\[ E_3^3(D_1,D_2,D_3) := (E_3(D_1)\times E_3(D_2^)\times E_3(D_3^))/H \]
we have
\[ \tr(\rho(\Frob_p)) = \psi_{D_1D_2D_3}(\Frob_p)(\alpha_p^3 + \beta_p^3). \]
%where $\rad_2(n)$ is the ``mod 2 radical'' of $n$, which just reads off whether the powers of the primes dividing $n$ are even or odd,
%\[ \rad_2(n) := \prod_{\substack{p\mid n\\ p\ \text{prime}}} p^{f(p)} \]
%where
%\[ f(p) := \left\{ \begin{array}{ll} 1 & \text{if}\ p^{2k+1}|n,\ p^{2k+2}\nmid n\ \text{for some}\ k\geq 0, \\
%                                     2 & \text{if}\ p^{2k+2}|n,\ p^{2k+3}\nmid n\ \text{for some}\ k\geq 0. \end{array}\right. \]
%$$ \rad_2(n) := \prod_{\substack{\max k > 0\ s.t.\\ p^k\mid n\ is\ odd \\ p\ \text{prime}}} p \cdot \prod_{\substack{\max k>0\ s.t.\\ p^k\mid n\ is\ even \\ p^2\ \text{prime}}} p^2. $$
%To ease notation when $D_1,D_2$ and $D_3$ are not coprime, we abuse our twisting notation and let $E_3(1)$ be the trivial twist, namely $E_3$ itself, and for any square-free $D$, we set
%$$ E_3(D^{2k}) := E_3(D^2), \qquad E_3(D^{2k+1}) := E_3(D) $$
%for any $k > 0$.

With this notation set up, we have
\begin{theorem} \label{mod3} Let $H$ be a subgroup of $G_3$ such that a crepant resolution of $E_3^3(D_1,D_2,D_3)/H$ is a rigid Calabi-Yau threefold defined over $\Qbb$. If we denote this resolution by $Y_3$, then
\[ L(Y_3,s) = L(s,\chi_3^3) \]
where $\chi_3$ is the Hecke character such that
\[ L(E_3(D_1D_2D_3),s) = L(s,\chi_3). \]
\end{theorem}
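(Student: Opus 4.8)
The plan is to run the proof of the preceding theorem essentially verbatim, with $E_3$ replaced by $E_3(D_j)$ in the $j$-th factor, carrying the twisting characters $\psi_{D_j}$ along, and then to convert the resulting Frobenius trace into $L(s,\chi_3^3)$ by Lemma~\ref{tracelemma}. First I would note that rigidity of $Y_3$ means the crepant resolution adds nothing to the middle cohomology, so by the K\"unneth formula
\[ H_\ell^3(\overline{Y_3}) \simeq H_\ell^3\bigl(\overline{E_3(D_1)\times E_3(D_2)\times E_3(D_3)}\bigr)^H \simeq \bigl(V_\ell(E_3(D_1))\otimes V_\ell(E_3(D_2))\otimes V_\ell(E_3(D_3))\bigr)^H, \]
a $2$-dimensional space. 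Its description is identical to the untwisted case: the isomorphisms $E_3 \to E_3(D_j)$ over $\overline{\Qbb}$ intertwine the order-$3$ automorphisms, so $\iota_3$ still acts on each $V_\ell(E_3(D_j))$ with characteristic polynomial $T^2+T+1$, and the only pure tensors fixed by $H\subset G_3$ are $e_1\otimes e_2\otimes e_3$ and $f_1\otimes f_2\otimes f_3$, where $e_j$ (resp.\ $f_j$) is a $\zeta_3$-eigenvector (resp.\ $\zeta_3^2$-eigenvector) in the $j$-th factor.

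Next I would repeat the Frobenius computation factor by factor. For a prime $p$ of good reduction unramified in $\Qbb(\zeta_3)$ the Galois action on $V_\ell(E_3(D_j))$ is that on $V_\ell(E_3)$ twisted by $\psi_{D_j}$, hence on the triple tensor it is the untwisted action twisted by $\psi_{D_1}\psi_{D_2}\psi_{D_3}=\psi_{D_1D_2D_3}$; reading off the same two matrix shapes as in the previous proof (diagonal with entries $\alpha_p^3,\beta_p^3$ when $\chi(\Frob_p)=1$, anti-diagonal otherwise) one recovers the trace formula already recorded before the theorem,
\[ \tr(\rho(\Frob_p)) = \psi_{D_1D_2D_3}(\Frob_p)\,(\alpha_p^3+\beta_p^3), \]
which vanishes when $\chi(\Frob_p)=-1$ since then $\alpha_p,\beta_p=\pm i\sqrt p$.

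Finally I would match this with $L(s,\chi_3^3)$. The curve $E_3(D_1D_2D_3)$ has CM by $K=\Qbb(\zeta_3)$, so by Ribet's theorem its weight-$2$ newform $f_{\chi_3}=\sum \tilde a_n q^n$ comes from the Hecke character $\chi_3$ of $K$, and having even weight it has trivial Nebentypus; Lemma~\ref{tracelemma} therefore applies and gives $b_p=\tilde a_p^3-3p\tilde a_p$ for $f_{\chi_3^3}=\sum b_n q^n$ at $p$ unramified in $K$. Since $E_3(D_1D_2D_3)$ is the $\psi_{D_1D_2D_3}$-twist of $E_3$ we have $\tilde a_p=\psi_{D_1D_2D_3}(\Frob_p)(\alpha_p+\beta_p)$ and $\alpha_p\beta_p=p$, and feeding this into Lemma~\ref{tracelemma} one checks that $b_p=\psi_{D_1D_2D_3}(\Frob_p)(\alpha_p^3+\beta_p^3)=\tr(\rho(\Frob_p))$. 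Comparing Euler factors at all but the finitely many bad primes then yields $L(Y_3,s)=L(s,\chi_3^3)$.

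I expect the main obstacle to be precisely this last bookkeeping step: one must be certain that the character twisting $\alpha_p^3+\beta_p^3$ in $\tr(\rho(\Frob_p))$ is exactly the one that the cube operation extracts from $\chi_3$ in Lemma~\ref{tracelemma}. This is where the trivial-Nebentypus hypothesis of that lemma is indispensable --- as the remark after it shows, the identity $b_p=\tilde a_p^3-3p\tilde a_p$ genuinely fails when that hypothesis is dropped --- and where one has to track carefully the sextic/cubic/quadratic nature of $\psi_{D_1D_2D_3}$ (so that the twist factor entering the two computations agrees on the primes that contribute), together with the comparison of Euler factors at the primes of bad reduction and the ramified primes of $K$.
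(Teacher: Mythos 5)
Your route is the same one the paper intends: rigidity plus K\"unneth reduce $H^3_\ell(\overline{Y_3})$ to the two invariant pure tensors, the twist characters are carried through the Frobenius computation to give the trace formula recorded just before the theorem, and Lemma~\ref{tracelemma} is invoked to compare with $\chi_3^3$ (compare the $E_4$ proof, which disposes of twists by saying they ``only multiply the trace of the action of Frobenius by the respective character''). The problem is your final ``one checks'' step, which is false unless the combined twist is quadratic. Write $\chi$ for the Hecke character of the untwisted $E_3$ and $\psi=\psi_{D_1D_2D_3}$, so that $\chi_3=\chi\otimes\psi$. For a split prime $p=\pfrak\overline{\pfrak}$ one has $\tilde a_p=(\chi\psi)(\pfrak)+(\chi\psi)(\overline{\pfrak})=\psi(\pfrak)\alpha_p+\overline{\psi(\pfrak)}\beta_p$, which is not of the scalar form $\psi(\Frob_p)(\alpha_p+\beta_p)$ once $\psi$ has order $3$ or $6$; and Lemma~\ref{tracelemma} (which does apply, since $f_{\chi_3}$ is the weight-$2$ newform of an elliptic curve over $\Qbb$) gives $b_p=\tilde a_p^3-3p\tilde a_p=(\chi^3\psi^3)(\pfrak)+(\chi^3\psi^3)(\overline{\pfrak})$, i.e.\ the coefficient of $L(s,\chi^3\otimes\psi^3)$. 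On the other hand the threefold's Frobenius trace, by your own eigenvector computation, is $(\chi^3\psi)(\pfrak)+(\chi^3\psi)(\overline{\pfrak})$, corresponding to $\chi^3\otimes\psi$: cubing the coefficients of the twisted curve extracts $\psi^3$, while the threefold only picks up $\psi$ once. The two agree at all split primes only when $\psi^2=1$, i.e.\ when $D_1D_2D_3$ is a perfect cube; the inert primes give $0$ on both sides and cannot rescue the comparison.

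So your argument establishes the asserted identity only for quadratic (or trivial) values of the combined twist, and genuinely fails for cubic or sextic twists, where $L(Y_3,s)=L(s,\chi^3\otimes\psi)\neq L(s,(\chi\otimes\psi)^3)$. This is exactly the phenomenon warned about in the remark following Lemma~\ref{tracelemma} and exploited later in the paper to produce counterexamples to Conjecture~\ref{Yui}: the final theorem on $Y_6$ carries precisely the condition ``if and only if $D_1^{k_1}D_2^{k_2}D_3^{k_3}$ is the cube of an integer,'' which is the constraint your bookkeeping step silently needs. You correctly flagged this as the delicate point, but the check you assert does not go through in the stated generality; a complete treatment has to confront the discrepancy between $\chi^3\otimes\psi$ and $\chi^3\otimes\psi^3$ rather than pass it through Lemma~\ref{tracelemma}.
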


In a similar fashion, we define the twists
\[ E_4(D) : y^2 = x^3 - Dx. \]
If $D$ is square-free this is a biquadratic twist of $E_4$, and otherwise if $D$ if cube-free this is a quadratic twist. The twisted threefolds are
\[ E_4^3(D_1,D_2,D_3) := (E_4(D_1)\times E_4(D_2)\times E_4(D_3))/G_4 \]
and we again let $E_4(1)$ denote $E_4$ itself.
\begin{theorem} Let $D_1,D_2$ and $D_3$ be integers such that a crepant resolution $Y_4$ of $E_4^3(D_1,D_2,D_3)/G_4$ is a rigid Calabi-Yau threefold defined over $\Qbb$. We have
\[ L(Y_4,s) = L(s,\chi_4^3) \]
where $\chi_4$ is the Hecke character such that
\[ L(E_4(D_1D_2D_3),s) = L(s,\chi_4). \]
\end{theorem}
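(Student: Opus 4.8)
The plan is to reproduce the arguments already given for the $E_3$ case --- both the untwisted theorem above, whose proof is given in full, and the twisted Theorem~\ref{mod3} --- with the order-three CM datum $\iota_3 = [\zeta_3]$ replaced by the order-four datum $\iota_4 = [i]$. First I would use rigidity to cut down to a two-dimensional Galois representation: in a rigid case the crepant resolution adds no classes to the middle cohomology, so $H^3_\ell(\overline{Y_4}) \simeq H^3_\ell(\overline{E_4(D_1)\times E_4(D_2)\times E_4(D_3)})^{G_4}$, and the K\"unneth formula rewrites this as $\big(V_\ell(E_4(D_1))\otimes V_\ell(E_4(D_2))\otimes V_\ell(E_4(D_3))\big)^{G_4}$, which is two-dimensional since $h^{2,1}(\widetilde{E_4^3/G_4}) = 0$.

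Next I would run the eigenspace and Galois analysis essentially verbatim: $[i]_*$ on each $V_\ell(E_4(D_j))$ has characteristic polynomial $T^2+1$, hence splits $V_\ell(E_4(D_j))$ into its $\pm i$-eigenlines, spanned by vectors $v_j$ and $w_j$; for a good prime $p$ (in particular $p\neq 2$), letting $\chi$ be the quadratic character of $\Qbb(i)$, the same Galois-conjugation-of-$i$ computation that appears in the proof above shows $\Frob_p$ acts diagonally on $\{v_j,w_j\}$ when $\chi(\Frob_p)=1$ and anti-diagonally, with off-diagonal entries multiplying to $-p$, when $\chi(\Frob_p)=-1$. Since $[i]_* v_j = i v_j$ forces $[i]^3_* v_j = -i v_j$, each generator of $G_4$ fixes $v_1\otimes v_2\otimes v_3$ (the factors contributing $i\cdot(-i)\cdot 1 = 1$) and likewise fixes $w_1\otimes w_2\otimes w_3$, so these two pure tensors form a basis of the invariant space. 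Reading off $\Frob_p$ on this basis then expresses $\tr\rho(\Frob_p)$ as a twist of $\alpha_p^3 + \beta_p^3$ when $p$ splits in $\Qbb(i)$, while both $\tr\rho(\Frob_p)$ and the target coefficient vanish when $p$ is inert by the CM property.

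The substantive step is identifying this with $L(s,\chi_4^3)$. Here $E_4(D_j)$ is a biquadratic twist of $E_4$ with Dirichlet character $\psi_{D_j}$; pushing the $\overline{\Qbb}$-isomorphism $V_\ell(E_4(D_j))\simeq V_\ell(E_4)$ through the tensor product collects the three twists into $\psi_{D_1D_2D_3}$, and one wants to recognise the result as the cube of the Hecke character $\chi_4$ of $E_4(D_1D_2D_3)$. Using $\alpha_p\beta_p = p$ one rewrites $\tr\rho(\Frob_p)$ in the shape $a_p^3 - 3p\,a_p$, where $a_p$ is the $p$-th Fourier coefficient of $f_{\chi_4}$, the newform of the elliptic curve $E_4(D_1D_2D_3)$ over $\Qbb$; this newform has trivial Nebentypus (being attached to an elliptic curve over $\Qbb$), so Lemma~\ref{tracelemma} identifies $a_p^3 - 3p\,a_p$ with the $p$-th coefficient of $f_{\chi_4^3}$. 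Matching all but finitely many Euler factors, and using that $L(Y_4,s)$ and $L(s,\chi_4^3)$ are each the $L$-function of a single newform --- hence pinned down by their good Euler factors together with weight and level --- yields the claimed equality, the finitely many ramified primes (including $p=2$) being covered by this last point.

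I expect the genuine obstacle to be the twist bookkeeping of the previous paragraph. In the $E_3$ situation the cube of a sextic or cubic twist character has order dividing two, which makes the matching with $\chi_3^3$ clean; a biquadratic character instead has order dividing four, and its cube is again of order dividing four, so one must check carefully that the three tensor factors and the cube producing $\chi_4^3$ conspire so that the resulting representation is genuinely $\chi_4^3$ rather than merely a character-twist of it. This is precisely the kind of subtlety flagged in the remark following Lemma~\ref{tracelemma}, where substituting $\zeta a_p$ for $a_p$ with $\zeta$ a genuine root of unity breaks the identity $b_p = a_p^3 - 3p\,a_p$; the work is to verify that the geometry of the $G_4$-quotient distributes the twist factors across the two eigenlines in exactly the way that avoids this. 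The remaining ingredients --- rigidity, the K\"unneth reduction, the eigenline analysis, and the bad-prime cleanup --- are routine once the $E_3$ cases are in hand.
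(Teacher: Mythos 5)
Your untwisted skeleton is exactly the paper's argument: rigidity plus K\"unneth to cut down to the two-dimensional invariant piece, the $\pm i$-eigenline analysis of $[i]_*$ (characteristic polynomial $T^2+1$), the diagonal versus anti-diagonal action of $\Frob_p$ according to the quadratic character of $\Qbb(i)$, and then $\alpha_p^3+\beta_p^3=(\alpha_p+\beta_p)^3-3p(\alpha_p+\beta_p)$ fed into Lemma \ref{tracelemma}. The paper's own proof does precisely this and then disposes of the twists in a single sentence --- a non-trivial twist only multiplies the trace of Frobenius by the corresponding quadratic or biquadratic character --- so that only the untwisted case is argued in detail.

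The gap is the step you yourself flag and then defer: ``the work is to verify that the geometry of the $G_4$-quotient distributes the twist factors across the two eigenlines in exactly the way that avoids this.'' That deferred verification is the entire content of the twisted statement, and in the generality you allow it does not go through. At a split prime $p=\pfrak\overline{\pfrak}$ each factor contributes $\psi_{D_j}(\pfrak)$ on one eigenline and its conjugate on the other, so the Frobenius eigenvalues on the two invariant pure tensors are $\psi(\pfrak)\chi(\pfrak)^3$ and its conjugate, where $\chi$ is the Hecke character of the untwisted $E_4$ and $\psi=\psi_{D_1}\psi_{D_2}\psi_{D_3}=\psi_{D_1D_2D_3}$; this is what the paper records as $\tr\rho(\Frob_p)=\psi_{D_1D_2D_3}(\Frob_p)(\alpha_p^3+\beta_p^3)$, i.e. $L(Y_4,s)=L(s,\chi^3\otimes\psi)$. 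On the other hand $\chi_4=\chi\otimes\psi$, so $\chi_4^3=\chi^3\otimes\psi^3=\chi^3\otimes\overline{\psi}$ when $\psi$ is genuinely biquadratic, and the two $L$-series coincide only when $\psi^2$ is trivial, i.e. when $D_1D_2D_3$ is a square up to fourth powers. Concretely, your proposed identity $\tr\rho(\Frob_p)=a_p^3-3pa_p$ with $a_p$ the coefficient of $E_4(D_1D_2D_3)$ already fails at split primes with $\psi(\pfrak)=\pm i$ --- exactly the failure mode described in the remark following Lemma \ref{tracelemma}, and exactly the point the paper itself makes later via $\psi^3=\overline{\psi}$ and the ``if and only if \dots is the square of an integer'' theorem in the intermediate Jacobian section. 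So you must either restrict to combined twists that are (at most) quadratic, where your argument closes and agrees with the paper's, or else your Euler-factor matching identifies $L(Y_4,s)$ with $L(s,\chi^3\otimes\psi_{D_1D_2D_3})$ rather than with $L(s,\chi_4^3)$; hoping that the $G_4$-geometry ``distributes the twist factors'' so as to avoid this cannot succeed.
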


\begin{proof} As a (non-trivial) twist only multiplies the trace of the action of Frobenius by the respective quadratic or biquadratic character as above, we need only show the $L$-series are as described in the case without twisting the underlying elliptic curves.

The induced action on the (extended) Tate module $V_\ell(E_4)$ is the only difference here. If we denote the action on $E_4$ by $[i]$, we have $[i]^2 = [-1]$, so the eigenvalues of the action are $\pm i$. For any $\sigma\in G_\Qbb$ and $(x,y)\in E_4(\overline{\Qbb})$ we have
\[ \sigma([i](x,y)) = [\sigma(i)]\sigma((x,y)). \]
Hence, for an eigenvector $\sigma_*(v)$ we have
\begin{align*}
 i\sigma_*(v) &= \sigma_*([i]_*(v)) \\
              &= (\sigma\circ [i])_*(v) \\
							&= ([\sigma(i)]\circ \sigma)_*(v) \\
							&= [\sigma(i)]_*(\sigma_*(v)) \\
							&= \chi(\sigma)[i]_*(\sigma_*(v)) 
\end{align*}
where $\chi$ is the non-trivial Dirichlet character on $\Qbb(i)$. We again find that if $c$ denotes complex conjugation, then $w = c_*(v)$ gives a $(-i)$-eigenvector of $[i]_*$ under $\sigma_*$ so that $v,w$ gives a basis for $V_\ell(E_4)$. The computation of the action of Frobenius again divides into the two cases where $\chi(\Frob_p) = \pm 1$, and is otherwise as above with $E_3$. \end{proof}

With a little adjustment to the action on the Tate module of $E_3$ we can find the $L$-series associated to the rest of the constructions of rigid Calabi-Yau threefolds using the automorphism of order 6 on $E_3$. (Which reproves Theorem \ref{mod3}.)

\begin{theorem} Let $J$ be a subgroup of $G_6$ such that a crepant resolution of $E_6^3(D_1,D_2,D_3)/J$ is a rigid Calabi-Yau threefold defined over $\Qbb$. If we denote this resolution by $Y_6$, then
\[ L(Y_6,s) = L(s,\chi_6^3) \]
where $\chi_6$ is the Hecke character such that
\[ L(E_6(D_1D_2D_3),s) = L(s,\chi_6). \]
\end{theorem}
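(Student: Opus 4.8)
The plan is to follow the template of the two preceding theorems, the only genuinely new input being the action of $\iota_6$ on the Tate module. First I would dispose of the twists: a non-trivial sextic, cubic or quadratic twist of $E_6$ multiplies the trace of Frobenius by the corresponding Dirichlet character of $\Qbb(\sqrt{D_i})$, so, exactly as in the discussion preceding Theorem~\ref{mod3}, on a crepant resolution of $E_6^3(D_1,D_2,D_3)/J$ one has $\tr(\rho(\Frob_p)) = \psi_{D_1D_2D_3}(\Frob_p)\,(\alpha_p^3+\beta_p^3)$, with $\alpha_p,\beta_p$ the Frobenius eigenvalues on $V_\ell(E_6)$. Since $L(E_6(D_1D_2D_3),s)=L(s,\chi_6)$ carries precisely these twisted local factors, it suffices to prove the statement for $D_1=D_2=D_3=1$.

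For the untwisted case, rigidity means the crepant resolution adds nothing to the middle cohomology, so $H_\ell^3(\overline{Y_6}) \simeq H_\ell^3(\overline{E_6}^3)^J \simeq (V_\ell(E_6)^{\otimes 3})^J$, a $2$-dimensional space. Now $E_6=E_3$ and $\iota_6=-\iota_3$, so on $V_\ell(E_6)=V_\ell(E_3)$ the map $[\zeta_6]_* := (\iota_6)_*$ equals $-(\iota_3)_*$; its characteristic polynomial is thus $T^2-T+1$, its eigenvalues are the primitive sixth roots of unity $\zeta_6,\zeta_6^5$, and its eigenspaces coincide with those of $(\iota_3)_*$. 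Because $\Qbb(\zeta_6)=\Qbb(\zeta_3)=\Qbb(\sqrt{-3})$ with the same non-trivial Dirichlet character $\chi$, the computation~\eqref{points3} and the lines following it apply verbatim: for a $\zeta_6$-eigenvector $v$ of $[\zeta_6]_*$ and any $\sigma\in G_\Qbb$, the vector $\sigma_*(v)$ lies in the $\sigma(\zeta_6)$-eigenspace, $w:=c_*(v)$ is a $\zeta_6^5$-eigenvector ($c$ complex conjugation), so $v,w$ is a basis of $V_\ell(E_6)$ in which $(\Frob_p)_*$ is $\diag(\alpha_p,\beta_p)$ when $\chi(\Frob_p)=1$ and anti-diagonal with off-diagonal product $-p$ when $\chi(\Frob_p)=-1$.

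It remains to identify $(V_\ell(E_6)^{\otimes 3})^J$ and read off the Frobenius trace. Each generator of $J$ has the form $\iota_6^a\times\iota_6^b\times\iota_6^c$ with $a+b+c\equiv 0\pmod 6$ (so that the holomorphic three-form is preserved), whence $v\otimes v\otimes v$ and $w\otimes w\otimes w$ are $J$-fixed; as the fixed space is $2$-dimensional by rigidity, these form a basis. Then $\rho(\Frob_p)$ is $\diag(\alpha_p^3,\beta_p^3)$ when $\chi(\Frob_p)=1$ and anti-diagonal, hence of trace $0$, when $\chi(\Frob_p)=-1$, and in the latter case $\alpha_p+\beta_p=0$ as well since $E_3$ has CM by $\Qbb(\sqrt{-3})$. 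Using $\alpha_p\beta_p=p$, in either case $\tr(\rho(\Frob_p)) = \alpha_p^3+\beta_p^3 = (\alpha_p+\beta_p)^3-3p(\alpha_p+\beta_p)$, and $\alpha_p+\beta_p$ is the $p$-th Fourier coefficient of $f_{\chi_6}$, which has trivial Nebentypus (a CM newform of even weight). Lemma~\ref{tracelemma} with $\psi=\chi_6$ then matches $\tr(\rho(\Frob_p))$ with the $p$-th coefficient of $f_{\chi_6^3}$ for all good $p$, and comparing Euler factors, with the standard bookkeeping at the finitely many bad primes, gives $L(Y_6,s)=L(s,\chi_6^3)$.

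I expect the main obstacle to be confirming that the $J$-fixed subspace of $V_\ell(E_6)^{\otimes 3}$ is spanned by $v^{\otimes 3}$ and $w^{\otimes 3}$ for \emph{every} admissible $J$: a mixed tensor such as $v\otimes v\otimes w$ is $J$-fixed exactly when every element of $J$ has its third exponent divisible by $3$, and excluding this relies essentially on the rigidity hypothesis $h^{2,1}(Y_6)=0$, i.e. $\dim(V_\ell(E_6)^{\otimes 3})^J=2$. Keeping straight which twist character (sextic, cubic or quadratic) attaches to each $D_i$ when the $D_i$ differ, and the sign conventions relating $(\iota_3)_*$ to $(\iota_6)_*$, is the other place where care is needed.
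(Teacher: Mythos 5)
Your proposal is correct and takes essentially the same route as the paper: the paper gives no separate argument for this theorem, remarking only that it follows from the two preceding proofs ``with a little adjustment to the action on the Tate module of $E_3$,'' and your write-up supplies exactly that adjustment --- $[\zeta_6]_*=-[\zeta_3]_*$ with characteristic polynomial $T^2-T+1$, eigenvalues the primitive sixth roots of unity, the same field $\Qbb(\zeta_6)=\Qbb(\sqrt{-3})$ and Dirichlet character, the same eigenvector/Frobenius matrix analysis, and Lemma~\ref{tracelemma}. Your treatment of the twists and of the two-dimensionality of the $J$-invariant subspace (spanned by $v^{\otimes 3}$, $w^{\otimes 3}$ by rigidity) likewise mirrors the paper's handling in the $E_3$ and $E_4$ cases.
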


\section{Intermediate Jacobians}

In this section we compute the intermediate Jacobians of the rigid Calabi-Yau threefolds described above. For each rigid Calabi-Yau threefold $X$, its intermediate Jacobian simply
\[ J(X) = H^{0,3}(X)/H^3(X,\Zbb), \]
a 1-torus. The method we use to compute these tori is to recognize how to reconstruct the torus structure of an elliptic curve from a analogous quotient of its cohomology groups, extending Roan's work on Kummer threefolds in \cite{Roan}. We then analyze the construction of the intermediate Jacobians as complex varieties more closely to determine a $\Qbb$-model in each case.

\subsection{Complex torus structure}

For any $\tau = \alpha + \beta i$ in the upper half plane we have the elliptic curve $E_\tau = \Cbb/\langle 1,\tau\rangle$, with uniformizing parameter $z = x+iy$. Thus, as complex tori, $E_i = E_4$ and $E_\zeta = E_3$ where $\zeta = e^{2\pi i/3}$.

Translations by $1,\tau$ in $\Cbb$ give rise to a basis $e,f\in H_1(E_\tau,\Zbb)$ so that
\[ \begin{pmatrix} e \\ f \end{pmatrix} = \begin{pmatrix} 1&0 \\ \alpha&\beta \end{pmatrix}\begin{pmatrix} \partial_x \\ \partial_y \end{pmatrix}, \]
where $\partial_x$ and $\partial_y$ are a basis for $H_1(E_\tau,\Cbb)$ corresponding to the uniformizing parameter. Taking duals in cohomology then gives a basis $e^*,f^*\in H^1(E_\tau,\Zbb)$ such that
\[ \begin{pmatrix} e^* & f^* \end{pmatrix}\begin{pmatrix} 1&0 \\ \alpha&\beta \end{pmatrix} = \begin{pmatrix} dx & dy \end{pmatrix}, \]
where $dz = dx + idy$ is the holomorphic $1$-form on $E$ corresponding to the uniformizing parameter. Thus, we have
\[ 2e^* = \left(1 + \frac{\alpha}{\beta}i\right)dz + \left(1 - \frac{\alpha}{\beta}\right)d\overline{z}, \]
\[ 2f^* = \frac{i}{\beta}(d\overline{z} - dz), \]
so using $d\overline{z}/2$ as a generator for $H^{0,1}(E_\tau)$ we have
\[ H^{0,1}(E_\tau)/H^1(E_\tau,\Zbb) = \Cbb/\left(\left(1 - \frac{\alpha}{\beta}i\right)\Zbb \oplus \frac{i}{\beta}\Zbb\right). \]
Applying the homothety given by multiplication by $\beta/i$ we have
\[ H^{0,1}(E_\tau)/H^1(E_\tau,\Zbb) \simeq \Cbb/(\Zbb\oplus (\alpha + i\beta)\Zbb) = E_\tau. \]

To mimic this procedure for the intermediate Jacobians of our rigid Calabi-Yau threefolds we need to find a basis for the integral cohomology and find a period relation with a basis for the complex cohomology.

As $H^3(E_\tau^3/G_\tau,\Zbb) \simeq H^3(E_\tau^3,\Zbb)^{G_\tau}$, we may start with the simpler $H^3(E_\tau^3,\Zbb)$. As above, let $(z_1,z_2,z_3)\in \Cbb^3$ be uniformizing coordinates for $E_\tau^3$, with $z_k = x_k + iy_k$ corresponding to the $k$-th coordinate. Let $\{e_k,f_k\}$ be a basis for $H_1(E_\tau,\Zbb)$, for $k=1,2,3$, with dual bases $e_k^*,f_k^*$. This naturally gives bases for both the homology $H_3(E_\tau^3,\Zbb)$ and the cohomology $H^3(E_\tau^3,\Zbb)$, with the relations
\[ \begin{pmatrix} e_k \\ f_k \end{pmatrix} = \begin{pmatrix} 1 & 0 \\ \alpha & \beta \end{pmatrix}\begin{pmatrix} \partial_{x_k} \\ \partial_{y_k} \end{pmatrix} \qquad\qquad \begin{pmatrix} e_k^* & f_k^* \end{pmatrix}\begin{pmatrix} 1 & 0 \\ \alpha & \beta \end{pmatrix} = \begin{pmatrix} dx_k & dy_k \end{pmatrix} \]
for each $k$. The holomorphic threeform $\Omega = dz_1\wedge dz_2\wedge dz_3$ can be written using the dual basis to give the desired period relation for the intermediate Jacobian. Indeed, as
\begin{align*}
 dz_1\wedge dz_2\wedge dz_3 &= dx_1\wedge dx_2\wedge dx_3 + i(dx_1\wedge dx_2\wedge dy_3 + dx_1\wedge dy_2\wedge dx_3 + dy_1\wedge dx_2\wedge dx_3) \\
 &\ - i(dx_1\wedge dy_2\wedge dy_3 + dy_1\wedge dx_2\wedge dy_3 + dy_1\wedge dy_2\wedge dx_3) - i(dy_1\wedge dy_2\wedge dy_3),
\end{align*}
we may write $\Omega = \Real(\Omega) + i\Img(\Omega)$, where
\begin{align*}
 \Real(\Omega) &= dx_1 \wedge dx_2 \wedge dx_3 - dx_1 \wedge dy_2 \wedge dy_3 - dy_1 \wedge dx_2 \wedge dy_3 - dy_1 \wedge dy_2 \wedge dx_3 \\
 &= e_1^* \wedge e_2^* \wedge e_3^* + \alpha(e_1^*\wedge e_2^*\wedge f_3^* + e_1^*\wedge f_2^*\wedge e_3^* + f_1^*\wedge e_2^*\wedge e_3^*) \\
 &\ + (\alpha^2 - \beta^2)(e_1^*\wedge f_2^*\wedge f_3^* + f_1^*\wedge e_2^*\wedge f_3^* + f_1^*\wedge f_2^* \wedge e_3^*) + (\alpha^3 - 3\alpha\beta^2)f_1^*\wedge f_2^*\wedge f_3^*, 
\end{align*}
and
\begin{align*}
 \Img(\Omega) &= dx_1 \wedge dx_2 \wedge dy_3 + dx_1 \wedge dy_2 \wedge dx_3 + dy_1 \wedge dx_2 \wedge dx_3 - dy_1 \wedge dy_2 \wedge dy_3 \\
 &= \beta(e_1^*\wedge e_2^*\wedge f_3^* + e_1^*\wedge f_2^*\wedge e_3^* + f_1^*\wedge e_2^*\wedge e_3^*) + 2\alpha\beta(e_1^*\wedge f_2^*\wedge f_3^* + f_1^*\wedge e_2^*\wedge f_3^* + f_1^*\wedge f_2^*\wedge e_3^*) \\
 &\ +(3\alpha^2\beta - \beta^3)f_1^*\wedge f_2^*\wedge f_3^*.
\end{align*}
With this we can now compute the intermediate Jacobians of our rigid Calabi-Yau threefolds. We start with the simpler case of $X_4$, a crepant resolution of $E_4^3/G_4$.

To compute the intermediate Jacobian of $X_4$, we have each underlying elliptic curve in the product having complex period $\tau = i$, so
\[ \Real(\Omega_4) = e_1^*\wedge e_2^*\wedge e_3^* - e_1^*\wedge f_2^*\wedge f_3^* - f_1^*\wedge e_2^*\wedge f_3^* - f_1^*\wedge f_2^*\wedge e_3^*, \]
\[ \Img(\Omega_4) = e_1^*\wedge e_2^*\wedge f_3^* + e_1^*\wedge f_2^*\wedge e_3^* + f_1^*\wedge e_2^*\wedge e_3^* - f_1^*\wedge f_2^*\wedge f_3^* \]
where $\Omega_4 \in H^{3,0}(X_4)$. These give a basis for $H^3(X_4,\Cbb)$. For the left hand side of the period relation, we choose classes $A_4 = \Real(\Omega_4)$ and $B_4 = \Img(\Omega_4)$ and note that $A_4,B_4\in H^3(X,\Zbb)$. We claim this is a basis for $H^3(X_4,\Zbb)$.

Indeed, suppose we have a basis $C,D$ of $H^3(X_4,\Zbb)$. As $A_4,B_4\in H^3(X_4,\Zbb)$, we know there is some matrix $M$ with \textit{integral} entries so that
\begin{align}
 \begin{pmatrix} A_4 \\ B_4 \end{pmatrix} = M\begin{pmatrix} C \\ D\end{pmatrix}. \label{periods}
\end{align}
Write
\[ M = \begin{pmatrix} a&b \\ c&d \end{pmatrix} \]
If $ad = bc$ then $A_4$ must be a multiple of $B_4$, which we know is not true from our explicit expressions above, so $a,b,c,d\in \Zbb$ and $ad\neq bc$, i.e., $M\in \GL_2(\Qbb)$. Moreover, as $C,D\in H^3(X_4,\Zbb)$ we may write
\[ C = qe_1^*\wedge e_2^*\wedge e_3^* + re_1^*\wedge e_2^*\wedge f_3^* + \ldots, \]
\[ D = se_1^*\wedge e_2^*\wedge e_3^* + te_1^*\wedge e_2^*\wedge f_3^* + \ldots, \]
for some $q,r,s,t\in \Zbb$. Multiplying all this out in \eqref{periods} then gives
\[ A_4 = (aq + bs)e_1^*\wedge e_2^*\wedge e_3^* + (ar + bt)e_1^*\wedge e_2^*\wedge f_3^* + \ldots, \]
\[ B_4 = (cq + ds)e_1^*\wedge e_2^*\wedge e_3^* + (cr + dt)e_1^*\wedge e_2^*\wedge f_3^* + \ldots, \]
and so, comparing with our expressions for $A_4$ and $B_4$ above, we have integral matrices such that
\[ \begin{pmatrix} a & b \\ c & d \end{pmatrix}\begin{pmatrix} q & r \\ s & t \end{pmatrix} = \begin{pmatrix} 1 & 0 \\ 0 & 1 \end{pmatrix}, \]
i.e., $M\in \GL_2(\Zbb)$. Hence, $A_4,B_4$ are a basis for $H^3(X_4,\Zbb)$.

Thus, the period relation for the intermediate Jacobian of $X_4$ is
\[ \begin{pmatrix} A_4 & B_4 \end{pmatrix} \begin{pmatrix} 1 & 0 \\ 0 & 1\end{pmatrix} = \begin{pmatrix} \Real(\Omega_4) & \Img(\Omega_4) \end{pmatrix} \]
so that
\[ 2A_4 = \Omega_4 + \overline{\Omega_4} \]
\[ 2B_4 = i(\overline{\Omega_4} - \Omega_4), \]
and using $\overline{\Omega_4}/2$ as a basis for $H^{0,3}(X_4)$, we have
\[ J(X_4) \simeq \Cbb/(\Zbb\oplus i\Zbb) = E_4. \]

\subsection{Model for the intermediate Jacobian $J(X_4)$ over $\Qbb$}

As we are not just interested in the complex torus structure of the intermediate Jacobian, we may not simply work up to homothety, and we must be more careful in how to recover not just the torus structure from the period relation, but the exact model defined over $\Qbb$. By the Uniformization Theorem, we know for any elliptic curve $E=\Cbb/\Lambda$, there is a $\lambda\in \Cbb^\times$ such that any particular model of $E$ corresponds uniquely to the torus $\Cbb/\lambda\Lambda$, the correspondence being
\[ E : y^2 = 4x^3 - \lambda^{-4}g_2(\Lambda)x - \lambda^{-6}g_3(\Lambda) \longleftrightarrow \Cbb/\lambda\Lambda, \]
where $g_2 = 60G_4$ and $g_3 = 140G_6$ with $G_{2k}$ the Eisenstein series of weight $2k$.

We are interested in using the computation above to recover a particular model, thus suppose we have some $E = \Cbb/\lambda\langle 1,\tau\rangle$. Translation by $1$ and $\tau = \alpha + i\beta$ no longer gives a basis for $H_1(E,\Zbb)$. We now get a basis using translation by $\lambda$ and $\lambda \tau$. Similarly, our integral classes are no longer $e^*$ and $f^*$, and our period relation is
\[ \begin{pmatrix} \frac{e^*}{\lambda} & \frac{f^*}{\lambda} \end{pmatrix}\begin{pmatrix} \lambda&0 \\ \lambda\alpha&\lambda\beta \end{pmatrix} = \begin{pmatrix} dx & dy \end{pmatrix} . \]
Hence, we recover the same relations as before
\[ 2 e^* = \left(1 + \frac{\alpha}{\beta}i\right)dz + \left(1 - \frac{\alpha}{\beta}i\right)d\overline{z}, \]
\[ 2 f^* = \frac{i}{\beta}(d\overline{z} - dz). \]
Using $d\overline{z}/2$ as a basis for $H^{0,1}(E)$, like above, gives
\[ H^{0,1}(E)/H^1(E,\Zbb) = \Cbb/\left(\left(1 - \frac{\alpha}{\beta}i\right)\Zbb \oplus \frac{i}{\beta}\Zbb\right) \]
which is \textit{not} $\Cbb/\lambda\langle 1,\tau\rangle$, the torus we started with. Instead, we must use the basis $d\overline{z}/(2\beta\lambda i)$ so that
\[ e^* = \lambda(\alpha - \beta i)\frac{dz}{2\beta \lambda i} - \lambda(\alpha + \beta i)\frac{d\overline{z}}{2\beta\lambda i}, \]
\[ f^* = \lambda\frac{dz}{2\beta\lambda i} - \lambda\frac{d\overline{z}}{2\beta\lambda i}. \]
Then we have
\[ H^{0,1}(E)/H^1(E,\Zbb) = \Cbb/\lambda(\Zbb\oplus (\alpha+i\beta)\Zbb) = E. \]
Thus for any $E = \Cbb/\lambda(\Zbb\oplus \tau\Zbb)$ we can recover the model using the period relation and basis $d\overline{z}/(2\beta\lambda^2i)$ of $H^{0,1}(E)$.

In our case of interest with $E_4:y^2 = x^3 - x$ we have (see \cite{Waldschmidt}) $E_4 = \Cbb/\lambda\langle 1,i\rangle$ with
\[ \lambda = \frac{\Gamma(\frac{1}{4})^2}{2\sqrt{2\pi}}. \]

\begin{remark} We note that $\lambda$ is transcendental, as $\Gamma(1/4)$ and $\sqrt{\pi} = \Gamma(1/2)$ are algebraically independent, \cite{Waldschmidt}. \end{remark}

On the threefold we have
\[ \begin{pmatrix} \frac{e_k^*}{\lambda} & \frac{f_k^*}{\lambda} \end{pmatrix}\begin{pmatrix} \lambda&0 \\ 0&\lambda \end{pmatrix} = \begin{pmatrix} dx_k & dy_k \end{pmatrix} \]
for the $k$-th component of $E_4^3$. Writing $\Omega_4 = \Real(\Omega_4) + i\Img(\Omega_4)$ and using the period relations for each of the underlying $E_4$ to write these in terms of the $e_k^*$ and $f_k^*$ we again find
\begin{align*}
 \Real(\Omega_4) &= A_4, \\
 \Img(\Omega_4) &= B_4.
\end{align*}
These classes are no longer integral, but as above our integral classes are $A_4/\lambda^3$ and $B_4/\lambda^3$ so that the period relation is
\[ \begin{pmatrix} \frac{A_4}{\lambda^3} & \frac{B_4}{\lambda^3} \end{pmatrix}\begin{pmatrix} \lambda^3 & 0 \\ 0 & \lambda^3 \end{pmatrix} = \begin{pmatrix} \Real(\Omega_4) & \Img(\Omega_4) \end{pmatrix}. \]

To get the correct model of the elliptic curve described by this relation, we must use the basis
\[ \frac{\overline{\Omega_4}}{2\beta\lambda^3 i} = \frac{\overline{\Omega_4}}{2\lambda^3 i} \]
for $H^{0,3}(X_4)$. Thus, writing
\[ A_4 = i\lambda^3 \left(\frac{\Omega_4}{2\lambda^3 i}\right) + i\lambda^3 \left(\frac{\overline{\Omega_4}}{2\lambda^3 i}\right) \]
\[ B_4 = \lambda^3\left(\frac{\Omega_4}{2\lambda^3 i}\right) - \lambda^3\left(\frac{\overline{\Omega_4}}{2\lambda^3 i}\right), \]
we find
\[ J(X_4) = \Cbb/\lambda^3(\Zbb\oplus i\Zbb) \neq E_4. \]
Even worse, the intermediate Jacobian is not even defined over a number field. However, using the basis $\overline{\Omega_4}/(2\lambda i)$ we recover the model of $E_4$ for the intermediate Jacobian, which is still quite natural. Thus, we have the following.

\begin{theorem} Let $D$ be a square-free integer, and $E_4(D^2)$ the elliptic curve with affine equation $y^2 = x^3 - D^2x$, a quadratic twist of $E_4$. Let $Y_4(D^2)$ be a crepant resolution of
\[ (E_4(D^2)\times E_4(D^2)\times E_4(D^2))/ G_4. \]
Then
\[ J(Y_4(D^2)) = E_4(D^2), \]
and hence there is a model for the intermediate Jacobian $J(Y_4(D^2))$ such that
\[ L(J(Y_4(D^2)),s) = L(s,\chi) \]
where $\chi$ is the Hecke character of $E_4(D^2)$, and
\[ L(Y_4(D^2),s) = L(s,\chi^3). \]
In particular, Conjecture \ref{Yui} is true for $Y_4(D^2)$.
\end{theorem}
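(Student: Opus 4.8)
The plan is to combine the twisted $L$-series computation for the threefold (the theorem immediately preceding, with $D_1 = D_2 = D_3 = D^2$) with a twisted version of the intermediate Jacobian computation already carried out for $X_4 = Y_4(1)$. First I would record that, by the theorem on twisted $L$-series for $E_4$, a crepant resolution $Y_4(D^2)$ of $(E_4(D^2)\times E_4(D^2)\times E_4(D^2))/G_4$ is a rigid Calabi-Yau threefold defined over $\Qbb$ with $L(Y_4(D^2),s) = L(s,\chi^3)$, where $\chi$ is the Hecke character attached to $E_4(D^2)$, i.e.\ $L(E_4(D^2),s) = L(s,\chi)$; here I use that $D^2 \cdot D^2 \cdot D^2 = D^6$ is a perfect square, so $E_4(D^6) \simeq E_4(D^2)$ over $\Qbb$ (the biquadratic character $\psi_{D^6} = \psi_{D^2}$ since sixth powers and squares agree up to fourth powers), hence the relevant Hecke character is exactly that of $E_4(D^2)$. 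So the only thing left is to identify $J(Y_4(D^2))$ as a complex torus \emph{with its $\Qbb$-model} and check it is $E_4(D^2)$.

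For the intermediate Jacobian, I would repeat the complex-torus and $\Qbb$-model computations of the $X_4$ case verbatim, tracking the effect of the twist. The key point is that $E_4(D^2) = \Cbb/\lambda_D\langle 1, i\rangle$ for some period $\lambda_D \in \Cbb^\times$; by the homogeneity of $g_2, g_3$ under $\Lambda \mapsto \mu\Lambda$ one checks $E_4(D^2)$ corresponds to the lattice $\lambda_D\langle 1,i\rangle$ where $\lambda_D = \lambda/\sqrt{D}$ (since replacing $x \mapsto D x$, $y \mapsto D^{3/2} y$ in $y^2 = 4x^3 - g_2(\langle 1,i\rangle)\lambda^{-4} x$ with $g_3 = 0$ rescales the Weierstrass data appropriately — the essential input being that $E_4$ has $g_3 = 0$ so only the $g_2$ scaling matters and a square twist is a genuine scaling of the lattice). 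The uniformizing coordinate $\tau = i$ is unchanged, so $\Real(\Omega_4)$ and $\Img(\Omega_4)$ still equal $A_4, B_4$ with exactly the same coefficients in the $e_k^*, f_k^*$ basis; the twist only rescales the integral classes to $A_4/\lambda_D^3$, $B_4/\lambda_D^3$. Running the same argument as for $X_4$ — choosing the basis $\overline{\Omega_4}/(2\lambda_D i)$ of $H^{0,3}(Y_4(D^2))$ rather than $\overline{\Omega_4}/(2\lambda_D^3 i)$ — recovers $J(Y_4(D^2)) = \Cbb/\lambda_D\langle 1,i\rangle = E_4(D^2)$ as the natural $\Qbb$-model.

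Granting $J(Y_4(D^2)) = E_4(D^2)$ with this model, we have $L(J(Y_4(D^2)),s) = L(E_4(D^2),s) = L(s,\chi)$, and since $E_4(D^2)$ has CM by $\Qbb(i)$ and $D^2$ is a square so the twist does not change the CM field, $\chi$ is the relevant Hecke character. Combined with $L(Y_4(D^2),s) = L(s,\chi^3)$ from the first paragraph, this is precisely the assertion of Conjecture \ref{Yui} in the case $K = \Qbb(i) \not\subset \Qbb$ (the ``otherwise'' branch), using Lemma \ref{tracelemma} to see that $L(s,\chi^3)$ is indeed the $L$-series of the weight-$4$ newform $f_{\chi^3}$, so $Y_4(D^2)$ is modular. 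The main obstacle I expect is bookkeeping rather than conceptual: pinning down the exact period $\lambda_D$ and being careful that the square twist $E_4(D^2)$ (as opposed to a general biquadratic or cube-free twist) is exactly a lattice homothety of $E_4$, so that the uniformizing parameter and hence the rational coefficients in $\Real(\Omega_4), \Img(\Omega_4)$ are genuinely unchanged; this is what makes the ``same model'' claim work and is the one place a non-square twist would break the argument.
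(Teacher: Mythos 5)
Your proposal is correct and follows essentially the same route as the paper, which states this theorem as a direct consequence of the preceding material: the twisted $L$-series theorem with $D_1=D_2=D_3=D^2$ (so $E_4(D^6)\simeq E_4(D^2)$ over $\Qbb$), together with the period computation in which the square twist is a real homothety $\lambda \mapsto \lambda_D$ of the lattice $\langle 1,i\rangle$, the integral classes become $A_4/\lambda_D^3$, $B_4/\lambda_D^3$, and the basis $\overline{\Omega_4}/(2\lambda_D i)$ of $H^{0,3}$ recovers the $\Qbb$-model $E_4(D^2)$ (the paper's ``$d\overline{\Omega}/(2\kappa\lambda i)$'' choice). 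Your identification of the crucial point --- that only a square (quadratic) twist is a genuine lattice rescaling leaving $\tau=i$ and the coefficients of $\Real(\Omega_4),\Img(\Omega_4)$ unchanged --- is exactly what the paper relies on, and what fails for biquadratic twists.
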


\begin{remark} As our $\Qbb$-model for $J(Y_4(D^2))$ requires choosing a scaled basis for the complex cohomology, one could argue the result above is ad hoc and simply designed to get the result we desire. However, note that having fixed the underlying elliptic curves on the threefold, we have fixed the integral cohomology classes, and so the period relation cannot be scaled by a non-integer. The uniformization theorem tells us integral twists require non-integral homotheties, and so our period relation cannot be scaled in some way to give another model for $J(Y_4(D^2))$, defined over $\Qbb$. Choosing a different uniformizing parameter to get a different model defined over $\Qbb$ is then what appears ad hoc. Hence, while the particular model for $J(Y_4(D^2))$ depends on the choice of basis, the natural $\Qbb$-model does not. Now, as any CM elliptic curve has a model defined over a number field, our $\Qbb$-model seems the best fit. \end{remark}

Given this, one can ask if a period relation
\[ \begin{pmatrix} \frac{A_4}{\mu} & \frac{B_4}{\mu} \end{pmatrix}\begin{pmatrix} \mu & 0 \\ 0 & \mu \end{pmatrix} = \begin{pmatrix} \Real(\Omega_4) & \Img(\Omega_4) \end{pmatrix} \]
for the intermediate Jacobian of a rigid Calabi-Yau threefold always gives the $\Qbb$-model if we use $\overline{\Omega_4}/(2\sqrt[3]{\mu}i)$ as a basis, which is still quite natural. This, however, does not work in, e.g., the case where each underlying elliptic curve has a distinct twist. Indeed, consider real $\kappa_1,\kappa_2$ and $\kappa_3$ such that $\Cbb/\kappa_i\lambda\langle 1,\tau\rangle$ are distinct quadratic twists $E_4(D_1^2), E_4(D_2^2)$ and $E_4(D_3^2)$. Let $X$ be a crepant resolution of
\[ (E_1(D_1^2)\times E_2(D_2^2)\times E_3(D_3^2))/G_4. \]
The period relation for $J(X)$ is then
\[ \begin{pmatrix} \frac{A_4}{\kappa_1\kappa_2\kappa_3\lambda^3} & \frac{B_4}{\kappa_1\kappa_2\kappa_3\lambda^3} \end{pmatrix}\begin{pmatrix} \kappa_1\kappa_2\kappa_3\lambda^3 & 0 \\ 0 & \kappa_1\kappa_2\kappa_3\lambda^3 \end{pmatrix} = \begin{pmatrix} \Real(\Omega_4) & \Img(\Omega_4) \end{pmatrix} \]
and using $\overline{\Omega_4}/(2\sqrt[3]{\kappa_1\kappa_2\kappa_3\lambda^3} i)$ as a basis for $H^{0,3}(X)/H^3(X,\Zbb)$ then gives a model not defined over $\Qbb$, while the model
\[ J(X) : y^2 = x^3 - (D_1D_2D_3)^2x \]
comes from using $\overline{\Omega_4}/(2\kappa_1\kappa_2\kappa_3\lambda i)$ as a basis instead. To reconcile these differences, we move away from working entirely with the underlying elliptic curves, and instead move to focusing on the threefold. Note that
\[ (E_4(D_1^2)\times E_4(D_2^2)\times E_4(D_3^2))/G_4 \]
is birational to
\[ (E_4((D_1D_2D_3)^2)\times E_4\times E_4)/G_4, \]
and we may shift our computations using the underlying elliptic curves to exploit the quadratic twists of the threefold itself. In this setting, writing the quadratic twist of
\[ (E_4(D_1^2)\times E_4(D_2^2)\times E_4(D_3^2))/G_4 \]
as $D = D_1D_2D_3$, with $E_4(D) = \Cbb/\kappa\lambda\langle 1,i\rangle$, we have that $d\overline{\Omega}/(2\kappa\lambda i) \in H^{0,3}(X(D))$ used as a basis recovers
\[ E(D^2) : y^2 = x^3 - D^2x = x^3 - (D_1D_2D_3)^2x, \]
the appropriate model for the conjecture.

Unfortunately, when allowing biquadratic twists, the conjecture is no longer always true. If we have a biquadratic character $\psi$, then $\psi^3 = \overline{\psi}$, i.e., the character
\[ \overline{\psi}(z) = \overline{\psi(z)} \]
and so if we have a rigid Calabi-Yau threefold $Y$, with CM, satisfying the conjecture, and take a biquadratic twist by $\psi$, (e.g., twist only the first elliptic curve by $\psi$) then its intermediate Jacobian has a model with
\[ L(J(Y_\psi),s) = L(s,\chi\otimes\psi) \]
while
\[ L(Y_\psi,s) = L(s,\chi^3\otimes\psi) \neq L(s,\chi^3\otimes\overline{\psi}) = L(s,(\chi\otimes\psi)^3). \]
Twisting two of the elliptic curves by the biquadratic character will, however, give a quadratic twist of the rigid threefold, so this notion of twisting threefolds in \cite{GouveaKimingYui} is the natural setting to characterize when Yui's conjecture is true.

\begin{theorem} Let $D_1,D_2$ and $D_3$ be squarefree integers, and $E_4(D_j^{k_j})$ the elliptic curve with affine equation $y^2 = x^3 - D_j^{k_j}x$, where $k_j\in\{1,2\}$ for each $j$. Let $Y_4(D_1^{k_1},D_2^{k_2},D_3^{k_3})$ be a crepant resolution of
\[ (E_4(D_1^{k_1})\times E_4(D_2^{k_2})\times E_4(D_3^{k_3}))/ G_4. \]
Then
\[ J(Y_4(D_1^{k_1},D_2^{k_2},D_3^{k_3})) = E_4(D_1^{k_1}D_2^{k_2}D_3^{k_3}). \]
There is a model for the intermediate Jacobian such that
\[ L(J(Y_4(D_1^{k_1},D_2^{k_2},D_3^{k_3})),s) = L(s,\chi), \]
where $\chi$ is the Hecke character of $E_4(D_1^{k_1}D_2^{k_2}D_3^{k_3})$, and
\[ L(Y_4(D_1^{k_1},D_2^{k_2},D_3^{k_3}),s) = L(s,\chi^3). \]
if and only if $D_1^{k_2}D_2^{k_2}D_3^{k_2}$ is the square of an integer.%, or in other words, if the twist by $D_1^{k_1}D_2^{k_2}D_3^{k_3}$ is a quadratic twist of the threefold $\widetilde{E_4^3/G_4}$. 
In particular, Conjecture \ref{Yui} is true only for quadratic twists of $\widetilde{E_4^3/G_4}$.
\end{theorem}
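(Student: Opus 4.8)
\emph{Proof proposal.}
The plan is to combine the two ingredients already developed — the computation of the complex torus $J(Y_4(\cdots))$ with its canonical $\Qbb$-model, and the description of $L(Y_4(\cdots),s)$ in terms of the underlying elliptic curves — and then read off exactly when the two $L$-series satisfy Yui's relation. Throughout, write $N:=D_1^{k_1}D_2^{k_2}D_3^{k_3}$, let $\chi_{E_4}$ be the Hecke character of $E_4$, let $\psi_D$ be the biquadratic twist character determined by $L(E_4(D),s)=L(s,\chi_{E_4}\psi_D)$, and set $\psi_N:=\psi_{D_1}^{k_1}\psi_{D_2}^{k_2}\psi_{D_3}^{k_3}$, which is also the twist character of $E_4(N)$.

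For the intermediate Jacobian I would repeat the computation for $Y_4(D^2)$, using that on middle cohomology only the \emph{product} of the twists is visible: the $G_4$-invariant part of $H^1_\ell(\overline{E_4})^{\otimes 3}$ is spanned by the pure tensors $v\otimes v\otimes v$ and $w\otimes w\otimes w$, and twisting the $j$-th factor multiplies the corresponding (Hodge, resp.\ Galois) vector by $\psi_{D_j}^{k_j}$, so the relevant data agrees with that of $(E_4(N)\times E_4\times E_4)/G_4$. Writing $E_4=\Cbb/\lambda\langle1,i\rangle$ and $E_4(N)=\Cbb/\lambda N^{-1/4}\langle1,i\rangle$ for a suitable fourth root (the uniformization $y^2=x^3-Nx\leftrightarrow\Cbb/\lambda N^{-1/4}\langle1,i\rangle$), the period computation of the previous subsection gives the naive relation $\Cbb/\lambda^3N^{-1/4}\langle1,i\rangle$; rescaling the $H^{0,3}$-basis by $\lambda^2$ exactly as in the $Y_4(D^2)$ case (the basis $\overline{\Omega}/(2\lambda N^{-1/4}i)$, forced by the requirement of a model over a number field, cf.\ the Remark after the $Y_4(D^2)$ theorem) yields
\[ J(Y_4(D_1^{k_1},D_2^{k_2},D_3^{k_3}))=\Cbb/\lambda N^{-1/4}\langle1,i\rangle=E_4(N). \]
The only new feature relative to the quadratic-twist case is that $N^{-1/4}$ need not be real, but the computation is formally unchanged; in particular $L(J(Y_4(\cdots)),s)=L(s,\chi_{E_4}\psi_N)$.

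For the threefold, the trace computation of the modularity section (the $G_4$-analogue of $\tr\rho(\Frob_p)=\psi_{D_1D_2D_3}(\Frob_p)(\alpha_p^3+\beta_p^3)$) shows $H^3_\ell(\overline{Y_4(\cdots)})$ is the untwisted middle cohomology twisted by $\psi_N$, so $L(Y_4(D_1^{k_1},D_2^{k_2},D_3^{k_3}),s)=L(s,\chi_{E_4}^3\psi_N)$; thus Yui's relation asks precisely that $L(s,\chi_{E_4}^3\psi_N)=L(s,(\chi_{E_4}\psi_N)^3)=L(s,\chi_{E_4}^3\psi_N^3)$. Since $\psi_D^2$ is the quadratic character attached to $D$, we get $\psi_N^2=\psi_{D_1}^{2k_1}\psi_{D_2}^{2k_2}\psi_{D_3}^{2k_3}=\varphi_N$, the quadratic character attached to $N$, so $\psi_N^2=1$ iff $N$ is a perfect square. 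If $N$ is a square then $\psi_N^3=\psi_N$ and the two $L$-series coincide, giving the conjecture — this is the ``quadratic twist'' regime (twisting two of the three curves by the same biquadratic character, or all three by squares, realizes the quadratic twists of $\widetilde{E_4^3/G_4}$), which yields the final sentence. If $N$ is not a square, then $\psi_N$ has order $4$, and the $L$-series differ: comparing Euler factors at a prime $p\equiv1\pmod4$ with $p=\pi\overline\pi$ in $\Zbb[i]$, equality would force $\pi^3\psi_N(\pi)+\overline{\pi^3\psi_N(\pi)}$ and $\pi^3\psi_N(\pi)^3+\overline{\pi^3\psi_N(\pi)^3}$ to agree; since $\psi_N(\pi)^3=\overline{\psi_N(\pi)}$ for $\psi_N(\pi)\in\mu_4$ and $\pi^3=(a+bi)^3$ has imaginary part $b(3a^2-b^2)\ne0$ when $a^2+b^2=p$, this forces $\psi_N(\pi)\in\{\pm1\}$ for all such $p$, contradicting $\psi_N$ having order $4$. (This is the cubic/biquadratic-twist obstruction already recorded in the Remark after Lemma~\ref{tracelemma}.)

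The step I expect to be most delicate is this last non-coincidence: ruling out an accidental equality $f_{\chi_{E_4}^3\psi_N}=f_{\chi_{E_4}^3\psi_N^3}$ when $\psi_N$ is genuinely biquadratic — i.e.\ showing $\chi_{E_4}^3$ has no hidden self-conjugacy absorbing the discrepancy — which, as above, reduces to the elementary fact that $\pi^3\notin\Rbb$ for Gaussian primes $\pi$ over rational primes $p\equiv1\pmod4$. A secondary point needing care is the bookkeeping in the intermediate-Jacobian step when $N<0$ or $N^{-1/4}$ is non-real: one must verify that the rescaling of the $H^{0,3}$-basis producing a $\Qbb$-model in the quadratic-twist case is still the one forced by the uniformization theorem and still returns the model $y^2=x^3-Nx$.
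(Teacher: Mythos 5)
Your overall route is the same as the paper's: the paper proves this theorem by exactly the reduction you set up --- the $\Qbb$-model of the intermediate Jacobian of the twisted threefold is $E_4(N)$ with Hecke character $\chi\otimes\psi_N$, the threefold's middle cohomology is the untwisted one twisted by the product character $\psi_N$ (so $L(Y_4,s)=L(s,\chi^3\otimes\psi_N)$), and Yui's relation becomes $(\chi\otimes\psi_N)^3=\chi^3\otimes\psi_N$, i.e.\ $\psi_N^3=\psi_N$, exactly as in the proof given for the $E_6$-analogue. Your explicit Euler-factor check at split primes (using $\Img(\pi^3)\neq 0$ for Gaussian primes over $p\equiv 1\pmod 4$) is a genuine strengthening: the paper merely asserts $L(s,\chi^3\otimes\psi)\neq L(s,\chi^3\otimes\overline{\psi})$ for biquadratic $\psi$, and your argument supplies the missing verification (modulo noting, via Chebotarev, that $\psi_N$ takes a value of order $4$ at infinitely many split primes of good reduction). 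The lattice bookkeeping with $\lambda N^{-1/4}$ and the observation that the choice of fourth root is immaterial also matches the paper's treatment with the real scalars $\kappa_j$.

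There is, however, one explicit step in your write-up that fails as stated: the claim that $\psi_N^2=\varphi_N$ is trivial if and only if $N$ is a perfect square, hence that $\psi_N$ has order $4$ whenever $N$ is not a square. The characters here are Hecke characters of $K=\Qbb(i)$, and $\psi_N^2$ cuts out $K(\sqrt{N})/K$; since $\sqrt{-1}\in K$, this character is already trivial when $N$ is the \emph{negative} of a rational square. For instance $D_1=-1$, $k_1=1$, $D_2=D_3=1$ gives $N=-1$, where $\psi_{-1}$ cuts out $\Qbb(\zeta_8)/\Qbb(i)$ and has order $2$, so $(\chi\otimes\psi_{-1})^3=\chi^3\otimes\psi_{-1}$ and the displayed $L$-series relation holds even though $N$ is not the square of an integer; your ``only if'' direction therefore breaks at all $N\in-(\Zbb_{>0})^2$. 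To be fair, the paper's own argument glosses over the same point --- it tacitly identifies ``$N$ not a square'' with ``$\psi_N$ biquadratic'' --- so this is as much a defect of the theorem's statement as of your proof; but since you assert the equivalence explicitly, you should either restrict the sign of $N$, or replace the criterion by ``$\psi_N$ is quadratic or trivial, i.e.\ $N\in\pm(\Qbb^\times)^2$,'' or argue separately why such $N$ are to be regarded as quadratic twists of $\widetilde{E_4^3/G_4}$ in the sense of the final sentence.
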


\subsection{Model for the intermediate Jacobian $J(X_6)$ over $\Qbb$}

Similarly for $E_6$, we have our exact model corresponding to the complex torus $\Cbb/\mu\Gamma$ where
\[ \mu = \frac{\Gamma(\frac{1}{3})^3}{2^{4/3}3^{1/2}\pi} \]
and $\Gamma = \Zbb \oplus \zeta_3\Zbb$ with $\zeta_3 = e^{2\pi i/3}$.

\begin{remark} Again, we have that $\mu$ is transcendental by \cite{Waldschmidt}. \end{remark}

Abusing notation, let $z_k = x_k + iy_k$ be the uniformizing parameter of the $k$-th elliptic curve on $E_6^3$. The period relation for the (complex) intermediate Jacobian of $X_6$ is
\[ \begin{pmatrix} A_6 & B_6 \end{pmatrix} \begin{pmatrix} 1 & 0 \\ -\frac{1}{2} & \frac{\sqrt{3}}{2} \end{pmatrix} = \begin{pmatrix} \Real(\Omega_6) & \Img(\Omega_6) \end{pmatrix} \]
where $\Omega_6 = dz_1\wedge dz_2\wedge dz_3$. This time we find
\[ \Real(\Omega_6) = A_6 -  \frac{1}{2}B_6, \]
\[ \Img(\Omega_6) = \frac{\sqrt{3}}{2}B_6, \]
with integral classes
\begin{align*}
 A_6 &= e_1^*\wedge e_2^*\wedge e_3^* - e_1^*\wedge f_2^*\wedge f_3^* - f_1^*\wedge e_2^*\wedge f_3^* - f_1^*\wedge f_2^*\wedge e_3^* - f_1^*\wedge f_2^*\wedge f_3^* \\
 B_6 &= e_1^*\wedge e_2^*\wedge f_3^* + e_1^*\wedge f_2^*\wedge e_3^* + f_1^*\wedge e_2^*\wedge e_3^* - e_1^*\wedge f_2^*\wedge f_3^* - f_1^*\wedge e_2^*\wedge f_3^* - f_1^*\wedge f_2^*\wedge e_3^*
\end{align*}
The method above shows
\[ H^{0,3}(X_6)/H^3(X_6,\Zbb) \simeq E_6 \]
as complex tori.

%As $\mu\notin\overline{\Qbb}$ w
We can apply the same steps above to moreover get an exact model for $J(X_6)$ over $\Qbb$, and putting everything together we find that, as above, if the conjecture is true for one of our threefolds, it is not true for any cubic or sextic twists of that threefold.
%\begin{theorem} Let $D$ be a square-free integer, and let $E_6(D^3)$ be the elliptic curve with affine model $y^2 = x^3 - D^3$. Let $Y_6(D^3)$ be one of the rigid Calabi-Yau threefolds from Theorem \ref{aut6} twisted by $D^3$ in each coordinate. If $\chi$ is the Hecke character of $E_6(D^3)$, then
%\[ L(Y_6(D^3),s) = L(s,\chi^3). \]
%In particular, Conjecture \ref{Yui} is true for $Y_6(D^3)$. \end{theorem}

%As with the biquadratic twists, the conjecture is not valid for cubic and sextic twists.

\begin{theorem} Let $D_1,D_2$ and $D_3$ be squarefree integers, and $E_6(D_j^{k_j})$ the elliptic curve with affine equation $y^2 = x^3 - D_j^{k_j}$, where $k_j\in\{1,2,3\}$ for each $j$. Let $Y_6(D_1^{k_1},D_2^{k_2},D_3^{k_3})$ be a crepant resolution of
\[ (E_6(D_1^{k_1})\times E_6(D_2^{k_2})\times E_6(D_3^{k_3}))/ S, \]
with $S = G_6,H_6,L_6$ or $M_6$. Then
\[ J(Y_6(D_1^{k_1},D_2^{k_2},D_3^{k_3})) = E_6(D_1^{k_1}D_2^{k_2}D_3^{k_3}). \]
There is a model for the intermediate Jacobian such that
\[ L(J(Y_6(D_1^{k_1},D_2^{k_2},D_3^{k_3})),s) = L(s,\chi), \]
where $\chi$ is the Hecke character of $E_6(D_1^{k_1}D_2^{k_2}D_3^{k_3})$, and
\[ L(Y_6(D_1^{k_1},D_2^{k_2},D_3^{k_3}),s) = L(s,\chi^3). \]
if and only if $D_1^{k_1}D_2^{k_2}D_3^{k_3}$ is the cube of an integer. In particular, Conjecture \ref{Yui} is true for $Y_6(D_1^{k_1},D_2^{k_2},D_3^{k_3})$ if and only if it is a cubic twist of $\widetilde{E_6^3/S}$.
\end{theorem}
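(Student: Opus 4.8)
The plan is to treat separately the three objects Conjecture~\ref{Yui} compares --- the intermediate Jacobian as a variety over $\mathbb{Q}$, its $L$-series, and the $L$-series of the threefold --- and then to match them, the cube condition surfacing only in the matching. \emph{Step 1, the intermediate Jacobian.} Set $N := D_1^{k_1}D_2^{k_2}D_3^{k_3}$. I would run the argument already used for $\widetilde{E_6^3/G_6}$ and for the twisted $E_4$-family: for each $j$ the factor $E_6(D_j^{k_j})$, carrying its chosen affine model, is the marked complex torus $\mathbb{C}/\kappa_j\mu\Gamma$ with $\Gamma=\mathbb{Z}\oplus\zeta_3\mathbb{Z}$ and $\kappa_j$ a sixth root of $(D_j^{k_j})^{-1}$ (the analogue of the transcendental $\lambda,\mu$). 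Expanding $\Omega_6=dz_1\wedge dz_2\wedge dz_3$ in the integral basis $e_k^\ast/(\kappa_k\mu)$, $f_k^\ast/(\kappa_k\mu)$ reproduces the untwisted classes $A_6,B_6$ scaled by $\kappa_1\kappa_2\kappa_3\mu^3$, so the period relation is the untwisted one rescaled by this factor; reading it off against the $E_6$-analogue of the normalised $H^{0,3}$-generator used for $E_4$ (the one recovering the model, not merely the complex torus) gives $\mathbb{C}/\kappa_1\kappa_2\kappa_3\mu\,\Gamma$. Since $\kappa_1\kappa_2\kappa_3$ is a sixth root of $N^{-1}$, and since the birational identification of $(E_6(D_1^{k_1})\times E_6(D_2^{k_2})\times E_6(D_3^{k_3}))/S$ with $(E_6(N)\times E_6\times E_6)/S$ shows this normalisation is intrinsic to the threefold, this torus is the $\mathbb{Q}$-model $E_6(N)$ --- uniformly in $S\in\{G_6,H_6,L_6,M_6\}$, since $H_\ell^3(\overline{E_6^3})^S$ is the same rank-two K\"unneth piece for each. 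Hence $J(Y_6)$ is a CM elliptic curve over $\mathbb{Q}$, and by Deuring $L(J(Y_6),s)=L(s,\chi_N)$ with $\chi_N$ the Hecke character of $E_6(N)$.

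\emph{Step 2, the threefold.} Put $K=\mathbb{Q}(\zeta_3)$, let $\chi_0$ be the Hecke character of $E_6=E_6(1)$, and for nonzero $D\in\mathbb{Z}$ let $\xi_D$ be the finite-order Hecke character of $K$ cutting out $K(\sqrt[6]{D})/K$ --- the Kummer character of $\sqrt[6]{D}$ under $\Aut(\overline{E_6})\cong\mu_6$. Then the Hecke character of $E_6(D)$ is $\chi_0\xi_D$ (the twist acts on the two CM eigenlines of $E_6$ by $\xi_D$ and $\xi_D^{-1}$, keeping the summands complex-conjugate), and $\xi_{D_1}\xi_{D_2}\xi_{D_3}=\xi_{D_1D_2D_3}$. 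As in the modularity section, $H_\ell^3(\overline{Y_6})|_{G_K}$ is spanned by $v_1\otimes v_2\otimes v_3$ and $w_1\otimes w_2\otimes w_3$, on which $\Frob_{\mathfrak{p}}$ at a split prime acts by $(\chi_0\xi_{D_1^{k_1}})(\mathfrak{p})(\chi_0\xi_{D_2^{k_2}})(\mathfrak{p})(\chi_0\xi_{D_3^{k_3}})(\mathfrak{p})=(\chi_0^3\xi_N)(\mathfrak{p})$ and by its conjugate, while inert primes contribute zero; hence $L(Y_6,s)=L(s,\chi_0^3\xi_N)$.

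\emph{Step 3, the comparison.} Since $K\not\subset\mathbb{Q}$, Conjecture~\ref{Yui} for $Y_6$ asserts exactly $L(Y_6,s)=L(s,\chi_N^3)$, and $\chi_N^3=(\chi_0\xi_N)^3=\chi_0^3\xi_N^3$; so by Step 2 the conjecture holds iff $L(s,\chi_0^3\xi_N)=L(s,\chi_0^3\xi_N^3)$. Two Hecke characters of $K$ share an $L$-series iff they are equal or $\Gal(K/\mathbb{Q})$-conjugate. If $\chi_0^3\xi_N^3=\chi_0^3\xi_N$ then $\xi_N^2=1$, i.e.\ $N\in(K^\times)^3$, which for $N\in\mathbb{Q}^\times$ means $N$ is a cube in $\mathbb{Z}$; conversely, if $N$ is a cube then $\xi_N$ is quadratic, so $\xi_N^3=\xi_N$ and the conjecture holds (a sixth power recovering the untwisted statement). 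If instead $\chi_0^3\xi_N^3=(\chi_0^3\xi_N)^\sigma=\overline{\chi_0}^{3}\,\xi_N^{-1}$, then $\chi_0^{3}/\overline{\chi_0}^{3}$ would have finite order; but $\chi_0/\overline{\chi_0}$ takes the value $\chi_0(\mathfrak{p})^2/p$ at a split prime, which is not a root of unity for all but finitely many $\mathfrak{p}$, so this case cannot occur. Therefore Conjecture~\ref{Yui} holds for $Y_6(D_1^{k_1},D_2^{k_2},D_3^{k_3})$ if and only if $N$ is a cube, equivalently iff it is a cubic twist of $\widetilde{E_6^3/S}$; and when it holds the two displayed $L$-series identities follow from Steps 1--2 together with Lemma~\ref{tracelemma}, applicable because the elliptic curve $E_6(N)/\mathbb{Q}$ has a newform with trivial Nebentypus.

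The main obstacle is keeping the twist-character bookkeeping honest: the object governing $Y_6$ is a \emph{finite-order Hecke character of $K$}, namely $\xi_N$, not a Dirichlet character of $\mathbb{Q}$, and the cube condition is precisely the assertion $\xi_N^3=\xi_N$, that is, that $\xi_N$ is quadratic. One must also pin down the $S$-invariant part of $H_\ell^3(\overline{E_6^3})$ for each admissible $S$, and exclude the Galois-conjugate alternative in Step 3 via the infinite order of $\chi_0/\overline{\chi_0}$. Verifying that the rescaled $H^{0,3}$-generator in Step 1 yields a genuine $\mathbb{Q}$-structure rather than merely the correct complex torus is comparatively routine once the birational model $(E_6(N)\times E_6\times E_6)/S$ is in hand.
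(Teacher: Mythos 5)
Your argument is correct and is essentially the paper's own: the twisted threefold has $L$-series $L(s,\chi^3\otimes\psi)$ while the intermediate Jacobian's $\Qbb$-model (pinned down, as in the $E_4$ case, via the birational model $(E_6(N)\times E_6\times E_6)/S$ and the rescaled generator of $H^{0,3}$) has $L$-series $L(s,\chi\otimes\psi)$, so the conjecture forces $\psi^3=\psi$, i.e.\ $N=D_1^{k_1}D_2^{k_2}D_3^{k_3}$ a cube. Your only genuine addition is the Step 3 care in excluding equality of the two $L$-series through the $\Gal(K/\Qbb)$-conjugate character (ruled out by the nontrivial infinity type of $\chi_0/\overline{\chi_0}$), a point the paper's one-line proof passes over silently.
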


\begin{proof} If a twist of $\widetilde{E_6^3/S}$ has corresponding $L$-series $L(s,\chi^3\otimes \psi)$, then the intermediate Jacobian has a $\Qbb$-model with $L$-series $L(s,\chi\otimes\psi)$. For the conjecture to be true, we must have
\[ (\chi\otimes\psi)^3 = \chi^3\otimes\psi, \]
and this is not possible if $\psi$ is cubic, or sextic. \end{proof}

%This makes it looks like we need only ensure there are a cube number of cubic(/sextic?) twists, regardless of whether they are the same or not. Check numbers on the compy. \end{proof}

Thus, we have verified Yui's conjecture for many examples, but it is not always the case. A somewhat sour irony here is that this construction of rigid threefolds requires the CM automorphisms, and it is precisely this presence of CM that causes the conjecture to fail.

\subsection{Roan's special automorphism}

As $E_3$ and $E_4$ are the only elliptic curves with the CM \textit{automorphisms}, we cannot naively continue in an attempt to construct rigid Calabi-Yau threefolds of CM-type using a triple product of the other CM elliptic curves and a triple product of CM actions. Nevertheless, one may ask if all CM newforms of weight 4 with rational coefficients can be realized as the newform associated to a rigid Calabi-Yau threefold defined over $\Qbb$. If Yui's conjecture is true, this is not possible. E.g., suppose there is a rigid Calabi-Yau threefold of CM-type, defined over $\Qbb$, with associated modular form having level $23^2$ (see \cite{Schuettclassification}). Then the intermediate Jacobian would be an elliptic curve defined over $\Qbb$, with CM by $\Qbb(\sqrt{-23})$, but of course no such curves exist.

If the conjecture is true, we can weaken our hopes to asking if every CM elliptic curve defined over $\Qbb$ is the $\Qbb$-model for the intermediate Jacobian of a rigid Calabi-Yau threefold, of CM-type, defined over $\Qbb$.
%This is true in the case with $N=3$ and $N=8$ as seen above, and as mentioned, this method does not immediately give any examples with any other minimal level, however, 
Roan provides such an example. Let $E_7$ be the elliptic curve with CM by $\Qbb(\sqrt{-7})$. For a particular model, we can take
\[ E_7 : y^2 + xy = x^3 - x^2 - 2x - 1 \]
which has discriminant 7. Let $\mu = e^{2\pi i/7}$ and
\[ \eta = \mu + \mu^2 + \mu^4 = \frac{-1 + \sqrt{7}i}{2}. \]
While the elliptic curve $E_7$ does not have a CM automorphism, the triple product $E_7^3$ has an automorphism given by
\[ g_\eta := \begin{pmatrix} 0 & 0 & 1 \\ 1 & 0 & \eta + 1 \\ 0 & 1 & \eta \end{pmatrix} \]
such that $E_7^3/\langle g_\eta \rangle$ has a rigid Calabi-Yau threefold $X_7$ for a crepant resolution, and
\[ L(X_7,s) = L(s,\chi_7^3) \]
where $\chi_7$ is the Hecke character of $E_7$. Hence, the unique newform of level 49 with CM corresponds to a rigid Calabi-Yau threefold coming from a triple product of CM elliptic curves. Unfortunately, Roan also shows that this threefold, as well as $\widetilde{E_6^3/M_6}$ are the only such rigid Calabi-Yau threefolds coming from a cyclic construction. The remaining cases with $E_N$, an elliptic curve with CM by $\Qbb(\sqrt{-N})$ for $N = 11,19,43,67,163$, will need more creative (non-cyclic) groups of automorphisms acting on the abelian threefolds if a similar construction is to give a rigid Calabi-Yau threefold.

%As $L$-series and cohomology of Calabi-Yau threefolds are birational invariants, ***any rigid Calabi-Yau threefold birational to $Y_4(D)$ or $Y_6(D)$ for some $D$ also satisfies the conjecture.

%\begin{remark} A priori, the conjecture only asks that for a rigid threefold $X$ defined over $\Qbb$, its intermediate Jacobian $J(X)$ has a model defined over $\Qbb$. However, Borcea \cite{Borcea} shows that if $X$ has CM by the imaginary quadratic field $K$, then $J(X)$ is defined over $K$. *** \end{remark}

%This Borcea-type construction does not produce any other \textit{rigid} Calabi-Yau threefolds of CM-type, so we cannot verify the conjecture further. On the other hand, it is entirely possible these are the only rigid Calabi-Yau threefolds of CM-type defined over $\Qbb$ ***no, Roan's CM by -7 example, and his classification of such examples. This only for cyclic quotients? Perhaps the others can be done via non-cycle abelian threefold CM quotients? Leave for further investigation.***. 

%The Borcea construction with other CM elliptic curves will produce other Calabi-Yau threefolds, but as the CM endomorphisms are not automorphisms in these cases, we cannot do the generalized Borcea construction and may only using involutions which preserve all the $(1,2)$ and $(2,1)$ classes. Thus, a quotient by only the involutions gives Calabi-Yau threefolds that are not rigid. Otherwise, we do not know of any other examples of rigid Calabi-Yau threefolds with CM-type *** (see Roan $/7$) ***. 

\subsection{Intermediate Jacobians of higher dimensional Calabi-Yau varieties}

Note that, while the CM is what causes problems for the conjecture on threefolds, if $\psi$ is a biquadratic character, then $\psi^5 = \psi$, and if $\varphi$ is a sextic character, then $\varphi^7 = \varphi$. This inspires an investigation of a generalization of the rigid threefold construction, to higher dimensional Calabi-Yau, in the spirit of \cite{CynkHulek}. One may consider an $n$-fold product
\[ E_j \times E_j \times \cdots \times E_j \]
and $G$, the maximal group of automorphisms given by products of the form
\[ \iota_j^{a_1} \times \iota_j^{a_2} \times \cdots \times \iota_j^{a_n} \]
preserving the holomorphic $n$-form, i.e., such that $\sum a_i \equiv 0\pmod{j}$. When $n$ is odd, only the holomorphic and anti-holomorphic $n$-forms are preserved by the entire group. For any fixed (odd) $n$, we can find all subgroups of $G$ such that a crepant resolution of the quotient by $G$ is a Calabi-Yau $n$-fold $Z_n$ with $h^n(Z_n) = 2$.

We can repeat the computation of the $L$-series, and using the notation above one finds the action of Frobenius on $Z_n$ at good primes $p$, given by matrices
\[ \begin{pmatrix} \alpha_p^n & 0 \\ 0 & \beta_p^n \end{pmatrix} \qquad \text{or} \qquad \begin{pmatrix} 0 & h_p^n \\ k_p^n & 0 \end{pmatrix} \]
when $p$ is a quadratic residue or non-residue respectively, so that
\[ L(Z_n,s) = L(s,\chi^n) \]
where $L(E_j,s) = L(s,\chi)$. Similarly, we may twist each of the underlying elliptic curves to get a birational rigid Calabi-Yau that is a twist of $Z_n$.

The computation of the middle intermediate Jacobian can also be extended. Again, let the $k$-th component in the product $E_\tau^n$ have uniformizing parameter $z_k = x_k + iy_k$, and period $\tau = \alpha + i\beta$, such that the period relation
\[ \begin{pmatrix} e_k^* & f_k^* \end{pmatrix}\begin{pmatrix} 1 & 0 \\ \alpha & \beta \end{pmatrix} = \begin{pmatrix} dx_k & dy_k \end{pmatrix} \]
holds. The holomorphic $n$-form is
\begin{align*}
 \Omega_n &= \bigwedge_{k=1}^n dz_k = \bigwedge_{k=1}^n dx_k + idy_k \\
 &= \bigwedge_{k=1}^n (e_k^* + \alpha f_k^*) + i\beta f_k^* \\
 &= \bigwedge_{k=1}^n e_k^* + \tau f_k^*.
\end{align*}
Hence, we have
\begin{align*}
 \Omega_n &= e_1^*\wedge e_2^* \wedge \cdots \wedge e_n^* + \tau(e_1^*\wedge \cdots \wedge e_{n-1}^* \wedge f_n^* + \cdots + f_1^*\wedge e_2^* \wedge \cdots \wedge e_n^*) \\
&\ +\tau^2(e_1^*\wedge \cdots \wedge e_{n-2}^*\wedge f_{n-1}^*\wedge f_n^* + \cdots + f_1^*\wedge f_2^*\wedge e_3^* \wedge \cdots \wedge e_n^*) + \cdots \\
&\ \cdots + \tau^n(f_1^*\wedge f_2^* \wedge \cdots \wedge f_n^*).
\end{align*}
Our particular choices of $\tau$ are roots of unity of small order, and we have
\[ \zeta^n \in \{1,\zeta,\overline{\zeta}\} \qquad \text{and} \qquad i^n \in \{1,i,-1,-i\} \]
for all $n$, where $\zeta$ is a primitive third root of unity. Hence,
%so that, %each of the coefficients of $\Omega_n$ using $E_6$ are $\pm 1, \pm \frac{1}{2}$ or $\pm \frac{\sqrt{3}}{2}$, and
%\[ i^n \in \{\pm 1,\pm i\} \]
%so that each of the coefficients of $\Omega_n$ in this case are $\pm 1$ or $\pm i$. 
for each of our $n$-folds there are integral classes $A_n,B_n\in H^n(Z_n,\Zbb)$ such that
\[ \begin{pmatrix} A_n & B_n \end{pmatrix}\begin{pmatrix} 1 & 0 \\ \alpha & \beta \end{pmatrix} = \begin{pmatrix} \Real(\Omega_n) & \Img(\Omega_n) \end{pmatrix} \]
and so the intermediate Jacobian $J^{n-1}(Z_n) \simeq E_\tau$ as complex varieties.

\begin{remark} We are interested only in $n$ odd, not only because $h^n(Z_n) > 2$ when $n$ is even, so conjecture \ref{Yui} is not relevant, but because there is no intermediate Jacobian associated to even cohomology, which is the middle cohomology when $n$ is even. \end{remark}

In the arithmetic setting we now have
\[ \Real(\Omega_n) = \lambda^nA_n \]
\[ \Img(\Omega_n) = \lambda^nB_n \]
so that
\[ \begin{pmatrix} \frac{A}{\lambda^n} & \frac{B}{\lambda^n} \end{pmatrix}\begin{pmatrix} \lambda^n & 0 \\ \lambda^n\alpha & \lambda^n\beta \end{pmatrix} = \begin{pmatrix} \Real(\Omega_n) & \Img{\Omega_n} \end{pmatrix} \]
and the $\Qbb$-model satisfies
\[ L(J^{n-1}(Z_n),s) = L(s,\chi) = L(E_\tau,s). \]
If we twist the underlying elliptic curves in $Z_n$ so that the $L$-series of the threefold is $L(s,\chi^n\otimes\psi)$, then the intermediate Jacobian has a $\Qbb$-model with $L$-series $L(s,\chi\otimes\psi)$. Thus, if $n\equiv 1\pmod 4$, the conjecture is true for all rigid threefolds in our construction coming from $E_4$, and if $n\equiv 1\pmod 6$, the conjecture is true for all rigid threefolds in our construction coming from $E_6$.

Let $n$ be an odd positive integer. The natural question is then as follows. Let $Z$ is a rigid Calabi-Yau $n$-fold of CM-type, defined over a number field $F$, having intermediate Jacobian $J^{n-1}(Z)$ with CM by a number field $K = \Qbb(\sqrt{-D})$, where the CM automorphism of $J^{n-1}(Z)$ has order $m$. Then, if $n \equiv 1\pmod{m}$ and
\[ L(J^{n-1}(Z),s) = \left\{ \begin{array}{ll} L(\chi,s)L(\overline{\chi},s) & \text{if}\ K\subset F, \\
 L(\chi,s) & \text{otherwise} \end{array}\right. \]
then must we have
\[ L(Z,s) = \left\{ \begin{array}{ll} L(\chi^n,s)L(\overline{\chi}^n,s) & \text{if}\ K\subset F, \\
 L(\chi^n,s) & \text{otherwise?} \end{array} \right. \]
Otherwise, if $n\not\equiv 1\pmod{m}$, does the above only fail if we exploit the CM property?

\section{Remarks on special values of $L$-functions}

With a $\Qbb$-model of the intermediate Jacobians associated to any $\Qbb$-model of our rigid Calabi-Yau threefolds via the conjecture, we are able to define the $L$-functions of the respective varieties and investigate their behaviour in their critical strips. %In particular, does the non-vanishing of one determine the non-vanishing of the other? Does the vanishing of one determine the vanishing of the other?

If $\chi$ is a Hecke character of conductor $\ffrak$, there is a primitive character $\chi_\ffrak$ equivalent to $\chi$ in the sense that
\[ L(s,\chi) = L(s,\chi_\ffrak). \]
In this section, we will always assume we are dealing with a primitive Hecke character. If $K$ is imaginary quadratic with discriminant $D = |D|$ and $\chi$ a primitive Hecke character of $K$ of infinity type $1$, then the completed $L$-function
\[ \Lambda(s,\chi) = (D\Nb\ffrak(\chi))^{s/2}\cdot 2(2\pi)^{-s}\Gamma(s)L(s,\chi) \]
satisfies the functional equation
\[ \Lambda(s,\chi) = W(\chi)\Lambda(2-s,\overline{\chi}), \]
where $W(\chi) = \pm 1$ is the \textit{root number}, and $\Gamma(s)$ is the complex gamma function that analytically continues the factorial. Moreover, this extends to powers of $\chi$. For an integer $n$, let $\chi_n$ denote the primitive character associated to $\chi^n$. %(This is necessary as $\chi^n$ is not automatically primitive if $n$ is even.) 
The completed $L$-function
\[ \Lambda(s,\chi_n) = \left\{\begin{array}{ll} (D\Nb\ffrak(\chi))^{s/2}\cdot 2(2\pi)^{-s}\Gamma(s)L(s,\chi_n) & \text{if}\ n\ \text{is odd}, \\
 D^{s/2}\cdot 2(2\pi)^{-s}\Gamma(s)L(s,\chi_n) & \text{otherwise} \end{array} \right. \]
satisfies the functional equation
\[ \Lambda(s,\chi_n) = W(\chi_n)\Lambda(w+1-s,\chi_w) \]
where
\[ W(\chi_n) = \left\{\begin{array}{ll} (-1)^{(n-1)/2}W(\chi) & \text{if}\ n\ \text{is odd}, \\ 1 & \text{otherwise} \end{array} \right. . \]
This follows by use of the adelic language for Hecke characters and considering local factors as in Tate's thesis. For full details, see Rohrlich's work in \cite{Rohrlich}.

As the completed $L$-functions are simply the analytic continuation of the $L(s,\chi_n)$ we abuse notation when computing with them, and are interested in computing the central values in the critical strip, namely
\[ L(n/2,\chi_n). \]
These can be very difficult to compute exactly, but a method of Waldspurger relates these values to certain half-integral weight modular forms. In this direction, one can show that the critical values $L(X_4(-D),3/2)$ are determined by certain cusp forms
\begin{align*}
 f_1 &= q - 3q^9 - 4q^{17} + 25q^{25} - 4q^{33} - 48q^{41} + q^{49} + 20q^{57} + 48q^{65} -
    4q^{73} - 27q^{81} + 68q^{89} - 76q^{97} + O(q^{105}) \\
 f_2 &= -q^3 + 5q^{11} - 7q^{19} + 2q^{35} + q^{43} + 14q^{51} - 13q^{59} + q^{67} - 27q^{75} +
    7q^{83} + 26q^{91} + 15q^{99} + O(q^{107})
\end{align*}
of weight $5/2$ and level $128$. 
%Waldspurger's theorem lifts $S_{5/2}(stuff)$ to $S_4(stuff)$ such that the vanishing and non-vanishing of the twists (how will the twists of the threefolds work?) can be read off the coefficients of some $5/2$ weight modular forms.
%Preamble all the Shimura, Waldspurger and Purkait stuff.
%Then we are interested in the space $S_{5/2}(N,\chi,f)$ for some large enough $N$ satisfying $32\mid N$ and a quadratic character $\chi$, where $L(f,s) = L(s,\chi_4^3)$. Using Pukait's theorem we can see that $S_{5/2}(32,\chi_{triv},f) = S_{5/2}(64,\chi_{triv},f)$ are trivial, but with $N=128$ the space is non-empty. Explicitly, the space $S_{5/2}(128,\chi_{triv})$ is 16-dimensional and we find two generators of $S_{5/2}(128,\chi_{triv},f)$,
%\begin{align*}
% f_1 &= q - 3q^9 - 4q^{17} + 25q^{25} - 4q^{33} - 48q^{41} + q^{49} + 20q^{57} + 48q^{65} -
%    4q^{73} - 27q^{81} + 68q^{89} - 76q^{97} + O(q^{105}) \\
% f_2 &= -q^3 + 5q^{11} - 7q^{19} + 2q^{35} + q^{43} + 14q^{51} - 13q^{59} + q^{67} - 27q^{75} +
%    7q^{83} + 26q^{91} + 15q^{99} + O(q^{107})
%\end{align*}
%By Waldspurger's theorem
%there is a function $A_f$ on square-free positive integers $n$ such that
%$$ A_f(n)^2 = L(X_4(-n),2) $$
%and
%$$ S_{5/2}(128,\chi_{triv},f) = \bigoplus_E U(E,f,A_f) $$
%where the sum includes all $E\geq 1$ such that $\widetilde{N_f} \mid E \mid 128$.

These methods are very different from those in this work, and somewhat involved, so we leave the details to future work in \cite{Molnar2}, but as an example of what is possible, we find results of the form
\begin{theorem} Let $E_4 : y^2 = x^3 - x$, and $X_4(-D)$ a crepant resolution of $E_4(-D)^3/G_4$. For any odd square-free $m\in\Nbb$ we have
\[ L(X_4(-D),2) = \left\{\begin{array}{ll} \frac{a_D^2}{\alpha\sqrt{D^3}} & \text{if}\ D\equiv 1\pmod 8, \\
\frac{b_D^2}{\beta\sqrt{D^3}} & \text{if}\ D\equiv 3\pmod 8, \\
0 & \text{if}\ D\equiv 5,7\pmod 8. \end{array}\right. \]
where $\alpha,\beta\in\Cbb^\times$ are related to the real periods of $X_4(-D)$, and
\[ f_1 = \sum_{D=1}^\infty a_Dq^D, \]
\[ f_2 = \sum_{D=1}^\infty b_Dq^D. \] \end{theorem}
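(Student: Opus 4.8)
The plan is to translate the statement into a question about the central critical value of a weight-$4$ newform with complex multiplication, and then to invoke the Shimura--Waldspurger correspondence in explicit form (Waldspurger's theorem, together with the CM-specific refinements of Rohrlich and Rodriguez-Villegas).

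First I would record, from the modularity result established above, that $L(X_4(-D),s)=L(s,\chi_4^3)$ for the appropriate Hecke character $\chi_4$ of $K=\Qbb(i)$; chasing the twist relations established above through the cube gives $\chi_4^3=\chi_0^3\otimes\psi$, where $\chi_0$ is the Hecke character of $E_4$ and $\psi$ is a character of order dividing $4$ determined by $D$ (recall $E_4(-D)$ is a biquadratic twist of $E_4$, so $\psi^3=\overline{\psi}$). Hence $L(X_4(-D),2)$ is the central value $L(2,\chi_0^3\otimes\psi)$ of the weight-$4$ newform $f_{\chi_0^3}\otimes\psi$ with CM by $K$; since $K\not\subset\Qbb$, the functional equation recalled above has a single $\Gamma$-factor and $s=2$ is its centre.

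Next I would compute the global root number $W(\chi_0^3\otimes\psi)$ as a product of local root numbers: the archimedean place and the odd primes $p\mid D$ contribute through the quartic residue symbol of $D$, while the prime above $2$ — ramified in $K$ and dividing the conductor of $E_4$ — contributes the delicate local factor. Rohrlich's formulas should yield $W=+1$ for $D\equiv 1,3\pmod 8$ and $W=-1$ for $D\equiv 5,7\pmod 8$, which already forces $L(X_4(-D),2)=0$ in the latter two cases. For $D\equiv 1,3\pmod 8$ I would then apply Waldspurger's theorem: it produces a half-integral weight form $g$ of weight $\tfrac52=\tfrac42+\tfrac12$ on $\Gamma_0(128)$, lying in the appropriate space, Shimura-lifting to $f_{\chi_0^3}$ up to the fixed twist, such that for squarefree $D$ in the appropriate square class
\[ L(2,\chi_0^3\otimes\psi)=\kappa\cdot\frac{|c_g(D)|^2}{\sqrt{D^3}}, \]
where $c_g(D)$ is the $D$-th Fourier coefficient of $g$ and $\kappa$ is an explicit nonzero constant built from the Petersson norm of $f_{\chi_0^3}$ and the real period of $X_4(-D)$ — up to algebraic factors a power of the transcendental $\lambda=\Gamma(\tfrac14)^2/(2\sqrt{2\pi})$, which is the role played by $\alpha$ and $\beta$ in the statement. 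The factor $\sqrt{D^3}$ reflects the $D$-dependence of the normalizing factor $(D\,\Nb\ffrak(\chi))^{s/2}$ at $s=2$, once the conductor of the twist ($\asymp D^3$ across this family) is inserted.

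It then remains to identify $g$ with $f_1$ or $f_2$. The space $S_{5/2}(\Gamma_0(128))$ splits, according to the local behaviour of $\psi$ at $2$, into a piece detecting $D\equiv 1\pmod 8$ and a piece detecting $D\equiv 3\pmod 8$, and the corresponding Hecke eigenforms are exactly $f_1$ and $f_2$ (whose printed $q$-expansions are supported on exponents $\equiv 1$, respectively $\equiv 3$, modulo $8$). Matching finitely many Hecke eigenvalues confirms that each Shimura-lifts to $f_{\chi_0^3}$, and then the displayed formula with $c_g(D)=a_D$, respectively $b_D$, gives the theorem; the squares appear because the coefficients are real. \textbf{The main obstacle}, and the reason the details are deferred to \cite{Molnar2}, is the $2$-adic analysis: since $2$ is ramified in $K=\Qbb(i)$ and occurs to the high power $2^7$ in the level, one must carry out the local root-number and local Waldspurger-factor computations at $2$ with care — both to obtain the vanishing for $D\equiv 5,7\pmod 8$ and to produce the clean dichotomy between $f_1$ and $f_2$ — while simultaneously pinning down $\kappa$ (equivalently $\alpha,\beta$) in terms of the real periods of $X_4(-D)$, i.e.\ the correct power of $\lambda$.
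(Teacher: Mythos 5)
Your outline follows exactly the route the paper indicates: the paper itself gives no proof of this statement, deferring all details to \cite{Molnar2}, and only signals that the central value $L(X_4(-D),2)$ is to be treated by Waldspurger's method via the weight-$5/2$, level-$128$ forms $f_1,f_2$, with vanishing for $D\equiv 5,7\pmod 8$ coming from the sign of the functional equation. Your proposal reconstructs precisely this strategy (modularity giving $L(s,\chi^3\otimes\psi)$, a Rohrlich-style root-number computation with the delicate local factor at the ramified prime $2$, and an explicit Shimura--Waldspurger identification of $a_D$ and $b_D$ with the $\sqrt{D^3}$ normalization), and, like the paper, it leaves the $2$-adic local analysis and the exact determination of $\alpha,\beta$ in terms of periods to the deferred detailed treatment.
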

%
%*** would be interesting to write L(E_4(-D),1) = ... and see the relation
%*** we have for $E_3 : y^2 = x^3 - 1, there are f_1,f_2,f_3,f_4\in S_{3/2}(576,\chi_{triv},\phi)
%*** such that
%*** L(E_4(-D),1) = \left\{\begin{array}{ll}
%*** \frac{a_D^2 \Omega_{E(-1)}}{6\sqrt{D}} & \text{if}\ D\equiv 1\pmod {24}
%*** \frac{b_D^2 \Omega_{E(-1)}}{\sqrt{D}} & \text{if}\ D\equiv 5\pmod {24}
%*** \frac{c_D^2 \Omega_{E(-1)}}{2\sqrt{D}} & \text{if}\ D\equiv 13\pmod {24}
%*** \frac{d_D^2 \Omega_{E(-1)}}{3\sqrt{D}} & \text{if}\ D\equiv 17\pmod {24}
%*** where
%*** f_1 = \sum_{n=1}^\infty a_nq^n
%*** f_2 = \sum_{n=1}^\infty b_nq^n
%*** f_3 = \sum_{n=1}^\infty c_nq^n
%*** f_4 = \sum_{n=1}^\infty d_nq^n
%*** 
%*** 
%*** 
%*** 
%*** 
%*** 
%
In particular, we see which twists of the Calabi-Yau threefolds have vanishing central critical values and which do not.

%References===========================================
\bibliographystyle{amsplain}
\bibliography{refs}

\end{document}